\documentclass[preprint,12pt]{elsarticle}

\usepackage[utf8]{inputenc}
\usepackage{lineno,hyperref}
\usepackage{latexsym,enumerate}
\usepackage{amsmath,amsthm,amsopn,amstext,amscd,amsfonts,amssymb}
\usepackage{graphicx}
\usepackage{hyperref} 
\usepackage{xcolor}
\usepackage{geometry}
\usepackage{titlesec}
\usepackage{fancyhdr}
\usepackage[shortlabels]{enumitem}
\usepackage{mathrsfs}
\usepackage{tikz}
\usepackage{standalone}
\usepackage{multicol}
\usepackage{float}
\usepackage{svg}
\usepackage{verbatim}
\usepackage{amsmath,tikz}
\usetikzlibrary{matrix}
\usepackage{amsmath,graphicx}
\newtheorem{theorem}{Theorem}[section]

\newtheorem{remark}[theorem]{Remark}
\newtheorem{corollary}[theorem]{Corollary}
\newtheorem{example}[theorem]{Example}
\newtheorem{proposition}{Proposition}

\newtheorem*{observation*}{Observation}

\modulolinenumbers[5]

\journal{TBA}

\begin{document}

\begin{frontmatter}

\title{Graph Theoretic Framework of Dynamical Systems Through the Boundary Polynomial}

\author[mymainaddress]{Marcos Masip}
%\cortext[mycorrespondingauthor]{Corresponding author}
\ead{marcosmasipresearch@gmail.com}

\address[mymainaddress]{Department of Mathematics and Statistics, Florida International University,
Miami, Florida 33199, USA}

\begin{abstract}
This work is devoted to the study of the relationships between graph theory and the qualitative analysis of ordinary differential equations, with a special focus on two-dimensional systems. In particular, we reinterpret classical results through the lens of boundary polynomials of graphs. The theory naturally leads to questions about limit cycles, which arise in many processes in nature and remain a central object of study in dynamical systems. The significance of limit cycles is underscored by their place in Hilbert’s 16th problem, one of the unsolved challenges from his famous list of 23 problems posed in 1900.
\end{abstract}

\begin{keyword}
\texttt{dynamical systems; boundary polynomial; weighted boundary polynomial; limit cycles; hopf bifurcation; liapunov number.}%\sep \LaTeX\sep Elsevier \sep template
\MSC[2020] 37C20\sep 05C31\sep 37G15
\end{keyword}

\end{frontmatter}

\linenumbers

\section{Introduction.}
This paper explores the mathematical intersection between graph theory and the qualitative analysis of dynamical systems governed by ordinary differential equations (ODEs). The focus is particularly on two-dimensional systems, where graphical representations and bifurcation theory provide deep insights into the behavior and stability of solutions. The rich structure of these systems allows for the application of classical results, including the Hartman–Grobman Theorem, the Stable Manifold Theorem, and Hopf bifurcation theory, among others. 

Within this framework, we use the \emph{boundary polynomials of graphs} as defined in \cite{Carballosa2025}, and demonstrate how they serve as a unifying tool for studying dynamical behavior. Beyond detecting phenomena such as limit cycles, structural stability, and bifurcation values, the boundary polynomial also provides a new perspective for re-examining classical two-dimensional systems. By recasting familiar results through this graph-theoretic lens, we aim to reveal structural patterns that are often obscured in the traditional analytic setting.

This approach lays the foundation for future investigations into computational methods that leverage graph-theoretic techniques to analyze nonlinear dynamics and approximate solutions to open problems in the theory of differential equations. It also provides a potential tool for determining the number of limit cycles, for example, the maximum number of limit cycles in quadratic systems (see \cite{Bautin} for local results).

Before delving into the general theory, let us examine a few concrete examples. They demonstrate how even small graphs can reveal surprisingly rich behavior when viewed through the lens of boundary polynomials. Later we introduce a weighted extension of the boundary polynomial, where each subset of vertices is assigned a weight. This weighted boundary polynomial will allow us to incorporate cancellations and signed contributions, enriching the algebraic structure and enabling connections to more subtle dynamical phenomena.

\begin{flushleft}
Let $G$ be a graph of order $n$. The \emph{boundary polynomial} of $G$ with variable $x$ and $y$ is defined in \cite{Carballosa2025} as follows:
\end{flushleft}
\begin{equation}\label{Poly1}
    B(G;x,y) = \sum_{S \subseteq V(G)}x^{|B(S)|}y^{|S|},
\end{equation}

\begin{flushleft}
or equivalently,
\end{flushleft}

\begin{equation}\label{Poly2}
    B(G;x,y) = \sum_{0 \leq i+j \leq n}B_{i,j}(G)x^{i}y^{j},
\end{equation}

\begin{flushleft}
where $B_{i,j}(G)$ represents the number of subsets of the vertex in $G$ with $|B(S)| = i$ and $|S| = j$.
\end{flushleft}

\begin{flushleft}
For the complete graph $K_{n}$ and the empty graph $E_{n}$ of order $n$, respectively, we have the following expressions:
\end{flushleft}

\begin{enumerate}[i)]\label{eq:Poly1}
    \item $B(K_{n};x,y) = (x+y)^{n}+1-x^{n}$.
    \item $B(E_{n};x,y) = (1+y)^{n}$.
 \end{enumerate}

\begin{flushleft}
For the rest of the work we are considering $x = x(t)$ and $y = y(t)$, $t \in \mathbb{R}$.
\end{flushleft}

\begin{flushleft}
Let $G_{1}$ be a graph of order $n_{1}$ and $G_{2}$ be a graph of order $n_{2}$. Consider the system:
\begin{equation}\label{eq:example1.1}
\frac{dx}{dt} = 2^{n_{1}}x - B(G_{1};x,y), \frac{dy}{dt} = 2^{n_{2}}y - B(G_{2};x,y),
\end{equation}
\end{flushleft}
\begin{example}

Considering $G_{1}$ to be the family of complete graphs, and $G_{2}$ the family of empty graphs, then we have the following results:

\begin{enumerate}[label=\alph*), font=\bfseries]
    \item If $G_{2}$ has at least three vertices and $G_{1}$ has either exactly one vertex or two vertices, then the system has a topological saddle at $(1,1)$. This is also the case for when $G_{1}$ has at least three vertices and $G_{2}$ has exactly one vertex.
    \item If $G_{2}$ has exactly one vertex and $G_{1}$ has either exactly one vertex or two vertices, then $(1,1)$ is an unstable node for the system.
    \item If both $G_{1}$ and $G_{2}$ have at least three vertices, respectively, then $(1,1)$ is a stable node for the system.
    \item If $G_{2}$ has exactly two vertices, then the system is degenerate at $(1,1)$.
\end{enumerate}
\end{example}

\begin{proof}
    If $G$ is a graph of order $n$, then $B(G;1,1) = 2^{n}$. Thus, $2^{n_{1}} - B(G_{1};1,1) = 0$ and $2^{n_{2}} - B(G_{2};1,1) = 0$, which shows that $(1,1)$ is an equilibrium point, and is of interest. The Jacobian matrix $Df(x,y)$ for the system takes the form:

    $$Df(x,y) = \begin{bmatrix}
        2^{n_{1}}-n_{1}(x+y)^{n_{1}-1}+n_{1}x^{n_{1}-1} & -n_{1}(x+y)^{n_{1}-1}\\
        0 & 2^{n_{2}}-n_{2}(1+y)^{n_{2}-1}
    \end{bmatrix}.$$
We note that if $n_{1} = n_{2} = 1$, then the upper triangular matrix becomes $$\begin{bmatrix}
    2 & -1\\0 & 1
\end{bmatrix},$$ and the eigenvalues are strictly positive, which implies that for all $(x,y) \in \mathbb{R}^{2}$, the system is unstable. If $n_{1} = 1$, $n_{2} > 1$ (same as $n_{2} \geq 2$), then $$Df(1,1) = \begin{bmatrix}
    2 & -1\\ 0 & 2^{n_{2}}-2^{n_{2}-1}n_{2}
\end{bmatrix},$$ and thus $\lambda_{1} = 2$, $\lambda_{2} = 2^{n_{2}}-2^{n_{2}-1}n_{2}$ are the eigenvalues. For $(1,1)$ to be a saddle we need $\lambda_{2} < 0$, which implies $2 < n_{2}$. For $(1,1)$ to be an unstable node, we need $\lambda_{2} > 0$ which implies $2 > n_{2}$, which is impossible in this case. The $(1,1)$ becomes degenerate at $n_{2} = 2$. If $n_{1} > 1$, $n_{2} = 1$, then $$Df(x,y) = \begin{bmatrix}
    2^{n_{1}}-2^{n_{1}-2}n_{1} + n_{1} & -2^{n_{1}-1}n_{1}\\ 0 & 1
\end{bmatrix},$$ with eigenvalues $\lambda_{1} = 2^{n_{1}}-2^{n_{1}-2}n_{1} + n_{1}$, $\lambda_{2} = 1$. For $(1,1)$ to be a saddle we need $\lambda_{1} < 0$, which simplifies to $n_{1} \geq 3$. For $(1,1)$ to be an unstable node, we need $\lambda_{1} > 0$ which only happens for $n_{1} = 1$ or $n_{1} = 2$. Degeneracy in this case is not possible since $n_{1}$ is a positive integer. For $n_{1} > 1$, $n_{2} > 1$, then $$Df(x,y) = \begin{bmatrix}
    2^{n_{1}}-2^{n_{1}-1}n_{1}+n_{1} & -2^{n_{1}-1}n_{1}\\ 0 & 2^{n_{2}} - 2^{n_{2}-1}n_{2}
\end{bmatrix}$$ with eigenvalues $\lambda_{1} = 2^{n_{1}}-2^{n_{1}-1}n_{1}+n_{1}$, $\lambda_{2} = 2^{n_{2}} - 2^{n_{2}-1}n_{2}$. For $(1,1)$ to be a saddle, we can consider $\lambda_{1} > 0$, $\lambda_{2} < 0$ which simplifies to $n_{1} = 2$, $n_{2} > 2$. We can also consider the case $\lambda_{1} < 0$, $\lambda_{2} > 0$, but simplifies to $n_{1} \geq 3$, $n_{2} < 2$ (same as $n_{2} = 1$) which is not possible. For $(1,1)$ to be an unstable node we need $n_{2} < 2$ which is not possible. On the other hand, for $(1,1)$ to be a stable node we need $\lambda_{1} < 0$, $\lambda_{2} < 0$ which simplifies to $n_{1} \geq 3$, $n_{2} > 2$ (same as $n_{2} \geq 3$). For degeneracy, having $n_{2} = 2$ is enough, since there does not exist $n_{1}$ such that $\lambda_{1} = 0$.

\end{proof}

\begin{figure}[H]
  \centering

  % ===================== Row 1 =====================
  \resizebox{0.65\textwidth}{!}{%
    \begin{minipage}[t]{0.32\textwidth}
      \centering
      \begin{tikzpicture}[every node/.style={circle, draw, semithick, minimum size=4mm, inner sep=1pt, font=\scriptsize}]
        \node (a1) at (-0.9, 0) {$v_1$};
        \node (a2) at ( 0.9, 0) {$v_2$};
        \draw[semithick] (a1) -- (a2);
      \end{tikzpicture}
      \vspace{0.25em}\par\footnotesize $G_{1}=K_{2}$
    \end{minipage}\hfill
    \begin{minipage}[t]{0.32\textwidth}
      \centering
      \begin{tikzpicture}[every node/.style={circle, draw, semithick, minimum size=4mm, inner sep=1pt, font=\scriptsize}]
        \node (b1) at ( 0,  1.0) {$u_1$};
        \node (b2) at (-1.1,-0.7) {$u_2$};
        \node (b3) at ( 1.1,-0.7) {$u_3$};
      \end{tikzpicture}
      \vspace{0.25em}\par\footnotesize $G_{2}=E_{3}$
    \end{minipage}\hfill
    \begin{minipage}[t]{0.32\textwidth}
      \centering
      \includegraphics[width=\linewidth]{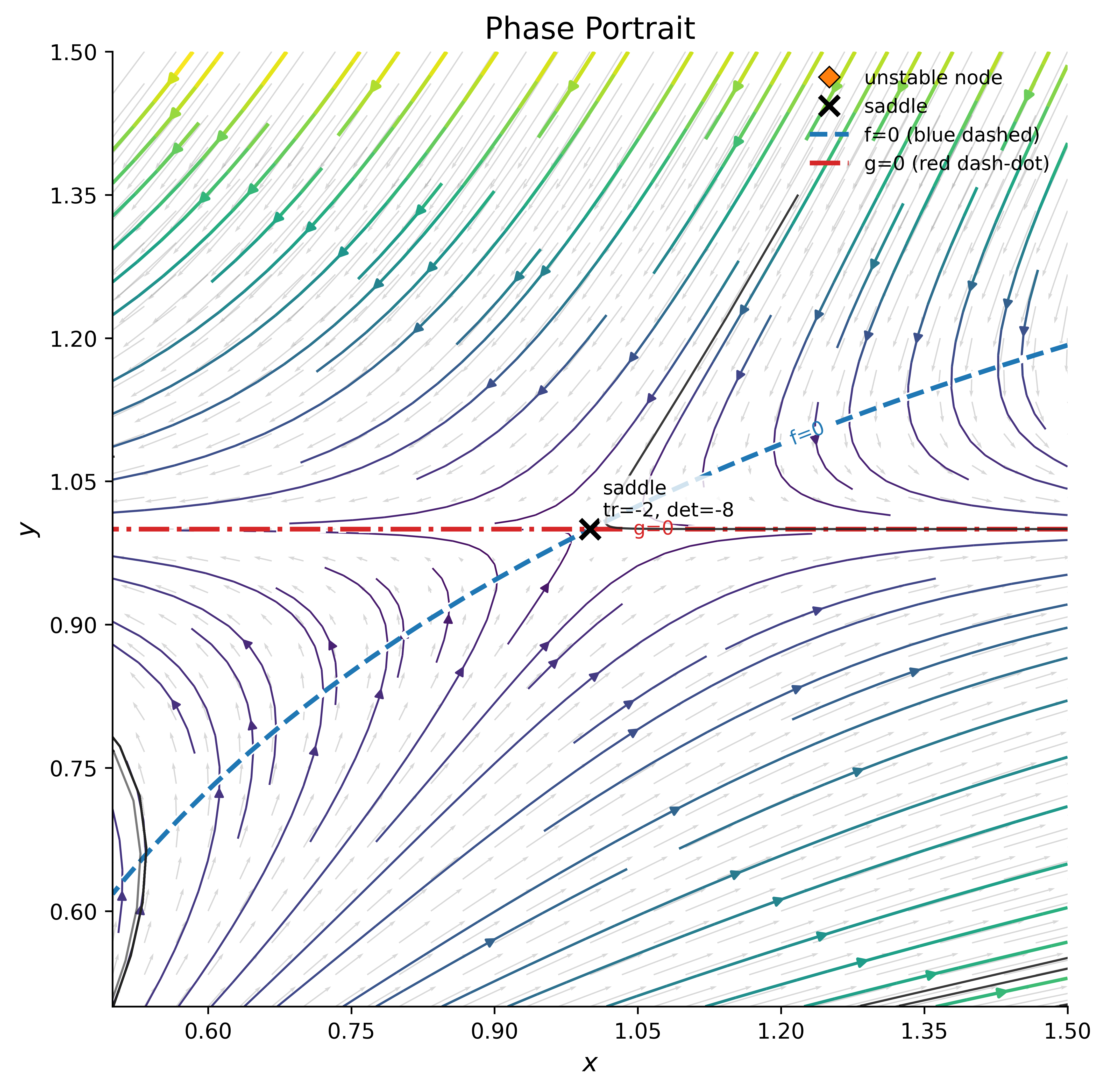}
      \par\footnotesize Topological saddle
    \end{minipage}%
  }

  \vspace{0.8em}

  % ===================== Row 2 =====================
  \resizebox{0.65\textwidth}{!}{%
    \begin{minipage}[t]{0.32\textwidth}
      \centering
      \begin{tikzpicture}[every node/.style={circle, draw, semithick, minimum size=4mm, inner sep=1pt, font=\scriptsize}]
        \node (c1) at (0,0) {$v_1$};
      \end{tikzpicture}
      \vspace{0.25em}\par\footnotesize $G_{1}=K_{1}$
    \end{minipage}\hfill
    \begin{minipage}[t]{0.32\textwidth}
      \centering
      \begin{tikzpicture}[every node/.style={circle, draw, semithick, minimum size=4mm, inner sep=1pt, font=\scriptsize}]
        \node (d1) at (0,0) {$u_1$};
      \end{tikzpicture}
      \vspace{0.25em}\par\footnotesize $G_{2}=E_{1}$
    \end{minipage}\hfill
    \begin{minipage}[t]{0.32\textwidth}
      \centering
      \includegraphics[width=\linewidth]{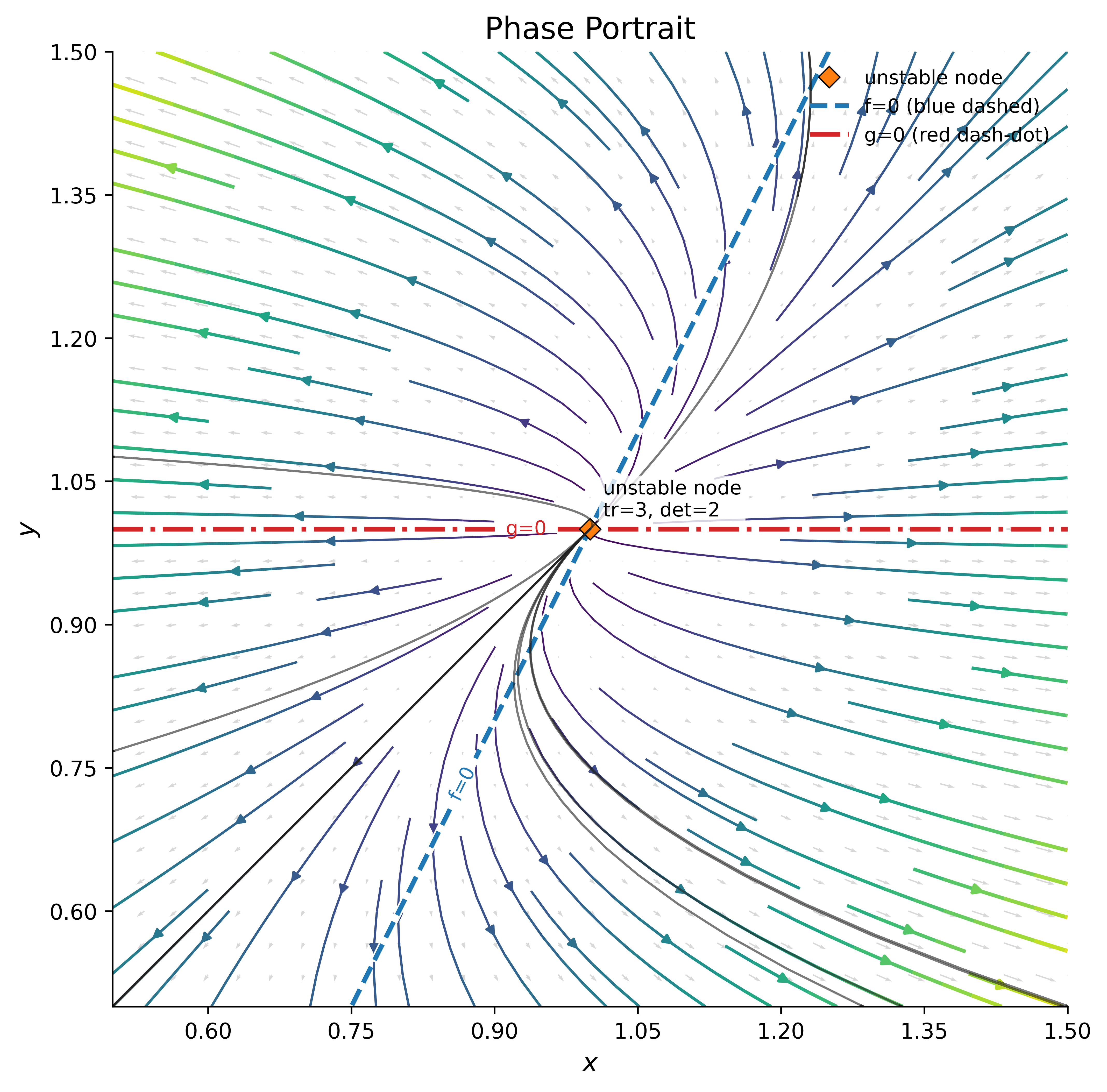}
      \par\footnotesize Unstable node
    \end{minipage}%
  }

  \vspace{0.8em}

  % ===================== Row 3 =====================
  \resizebox{0.65\textwidth}{!}{%
    \begin{minipage}[t]{0.32\textwidth}
      \centering
      \begin{tikzpicture}[every node/.style={circle, draw, semithick, minimum size=4mm, inner sep=1pt, font=\scriptsize}]
        \node (e1) at (0, 1.2) {$v_1$};
        \node (e2) at (-1.04, -0.6) {$v_2$};
        \node (e3) at ( 1.04, -0.6) {$v_3$};
        \draw[semithick] (e1)--(e2);
        \draw[semithick] (e2)--(e3);
        \draw[semithick] (e3)--(e1);
      \end{tikzpicture}
      \vspace{0.25em}\par\footnotesize $G_{1}=K_{3}$
    \end{minipage}\hfill
    \begin{minipage}[t]{0.32\textwidth}
      \centering
      \begin{tikzpicture}[every node/.style={circle, draw, semithick, minimum size=4mm, inner sep=1pt, font=\scriptsize}]
        \node (f1) at (-0.9,  0.9) {$u_1$};
        \node (f2) at ( 0.9,  0.9) {$u_2$};
        \node (f3) at (-0.9, -0.9) {$u_3$};
        \node (f4) at ( 0.9, -0.9) {$u_4$};
      \end{tikzpicture}
      \vspace{0.25em}\par\footnotesize $G_{2}=E_{4}$
    \end{minipage}\hfill
    \begin{minipage}[t]{0.32\textwidth}
      \centering
      \includegraphics[width=\linewidth]{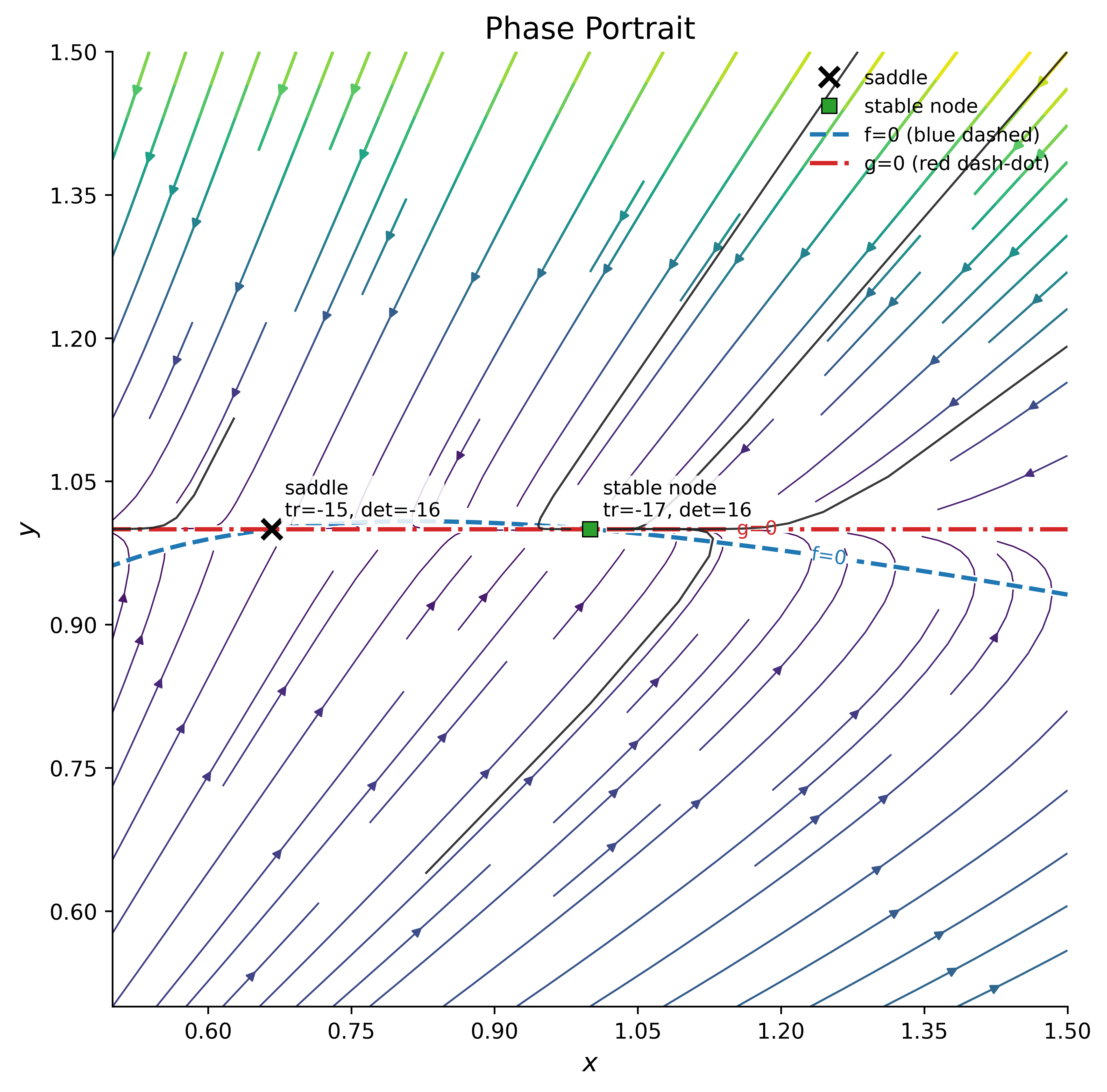}
      \par\footnotesize Stable node
    \end{minipage}%
  }

  % ===================== Caption =====================
  \caption{Examples of $G_{1}$ vs.\ $G_{2}$ for (\ref{eq:example1.1}) along with their respective stability for $(1,1)$.}
  \label{fig:three_layers_graphs_images}
\end{figure}

Now, let $G$ be a graph of order $n$ and let $w: 2^{V(G)} \to \{1, k\} \subset \mathbb{R}$ be a weight function on subsets of vertices defined by,

\begin{equation}\label{weight:function}
    w(S) = \begin{cases}
    1, & \text{ if } S = \emptyset\text{ or } |S| \text{ is odd} \\[6pt]
    k, & \text{ otherwise }

    \end{cases},
\end{equation}

\begin{flushleft}
and define the weighted boundary polynomial of a graph $G$ of order $n$ to be
\end{flushleft}

\begin{equation}\label{weight:2}
    B_{w}(G;x,y) = \sum_{0 \leq i+j \leq n}B_{i,j}^{(w)}(G)x^{i}y^{j},
\end{equation}
where
\begin{equation}\label{weight:sum_of_weights}
    B_{i,j}^{(w)}(G) = \begin{cases}
        \sum_{S \subseteq V(G),|B(S)|=i,|S|=j}w(S), \text{ if there exists an } S \text{ such that } |B(S)| = i \text{ and } |S| = j,\\[6pt]
        \sum_{\ell=1}^{\frac{n-3}{2}}k_{\ell}, \text{ if } |B(S)| = i, |S| = j \text{ does not exist and } 3 < n \text{ is odd}, \\[6pt]
        k^{\ast}, \text{ otherwise }
    \end{cases}
\end{equation}
with $\{k^{\ast}\} \cup \{k_{\ell}\}_{\ell=1}^{\frac{n-3}{2}} \subset \mathbb{R}$.
\newline

\begin{observation*}
    Just like for the boundary polynomial, when expanding the weighted boundary polynomial, we don't consider $B_{\ell,0}^{(w)}(G)$ for $\ell \not= 0$. That is, you can consider $B_{\ell,0}^{(w)}(G) = 0$ for $\ell \not= 0$.
\end{observation*}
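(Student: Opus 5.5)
The plan is to justify the convention in the Observation by reading off the $y^{0}$ part of the weighted boundary polynomial directly from the definition (\ref{weight:2})--(\ref{weight:sum_of_weights}), and then checking that setting $B_{\ell,0}^{(w)}(G)=0$ for $\ell\neq 0$ is the only choice that agrees with the unweighted polynomial (\ref{Poly1})--(\ref{Poly2}).

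First I identify all subsets $S\subseteq V(G)$ with $|S|=0$. The only such subset is $S=\emptyset$. Its boundary is $B(\emptyset)=\emptyset$, since the boundary of $S$ consists of vertices outside $S$ adjacent to some vertex of $S$. So $|B(\emptyset)|=0$, and in the unweighted polynomial the entire $y^{0}$ part is the single monomial $x^{0}y^{0}=1$. That is, $B_{0,0}(G)=1$ and $B_{\ell,0}(G)=0$ for every $\ell\neq 0$, which is the sense in which we ``don't consider'' these coefficients in $B(G;x,y)$. In the weighted setting, the first case of (\ref{weight:sum_of_weights}) gives $B_{0,0}^{(w)}(G)=w(\emptyset)=1$ by (\ref{weight:function}). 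This also gives the consistency check $B_w(G;x,0)=B(G;x,0)=1$.

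The step needing care is the pairs $(\ell,0)$ with $\ell\neq 0$. By the previous paragraph, no $S$ realizes $|B(S)|=\ell$ and $|S|=0$. A literal reading of (\ref{weight:sum_of_weights}) would therefore put these pairs into the fallback cases, assigning either $\sum_{\ell'}k_{\ell'}$ or $k^{\ast}$. Those fallback cases are meant to fill slots $(i,j)$ with $j\geq 1$ that a given graph fails to realize. For $j=0$ the nonexistence is structural and holds for every graph, because the empty set is the unique subset of size $0$. If nonzero constants were allowed there, they would add the term $\sum_{\ell\geq 1} c_{\ell}x^{\ell}$, which depends only on $n$ and not on $G$. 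Such a term would also break the identity $B_w(G;x,0)=1$.

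So I would state the observation as a normalization. The fallback clauses of (\ref{weight:sum_of_weights}) are applied only to pairs $(i,j)$ with $j\geq 1$, and for $j=0$ the defining sum over the (empty) family of admissible $S$ is taken literally, giving the empty sum $0$. With this convention the $y^{0}$ coefficient of $B_w$ coincides with that of $B$, exactly as for the boundary polynomial. I expect no real obstacle. The only delicate point is making clear that this is a convention resolving the overlap with the ``otherwise'' clause, rather than a consequence of it.
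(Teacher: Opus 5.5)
Your proposal is correct and follows the same reasoning the paper relies on. The paper gives no proof; it stipulates the Observation as a convention ``just like for the boundary polynomial,'' where the only subset with $|S|=0$ is $\emptyset$, which contributes the single term $x^{0}y^{0}$, and you rightly note that the literal fallback clauses ($k^{\ast}$ or $\sum k_{\ell}$) would otherwise fill the slots $(\ell,0)$, so setting them to $0$ is a normalization that overrides those clauses rather than a consequence of the definition.
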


We should take note on the following notations:

\begin{equation}
\begin{aligned}
    B(G;x) &= B(G;x,x),\\
    B_{w}(G;x) &= B_{w}(G;x,x),\\
    \widetilde{B}(G;x,y) &= B(G;x,y) - 1, \\
    \widetilde{B}_{w}(G;x,y) &= B_{w}(G;x,y) - 1.
\end{aligned}
\end{equation}

For the second example, we will use this graph theoric perspective on the classical Lienard system of the form

\begin{equation}\label{Lienard:1}
    \frac{dx}{dt} = y - \widetilde{B}_{w}(G_{1};x), \frac{dy}{dt} = -\widetilde{B}_{w}(G_{2};x),
\end{equation}
 where $G_{1}$, $G_{2}$ are graphs of finite order.

\begin{theorem}\label{thm:1.2}
The following results are concerning the origin of (\ref{Lienard:1}):

    \begin{enumerate}[label=\textbf{\alph*)}]
    \item The system (\ref{Lienard:1}) with $G_{2} = E_{1}$, $G_{1}$ a graph of order $3$, and $B_{0,1}^{(w)}(G_{1})(B_{1,2}^{(w)}(G_{1}) + B_{2,1}^{(w)}(G_{1}) + B_{0,3}^{(w)}(G_{1})) < 0$ has exactly one limit cycle which is stable if $B_{0,1}^{(w)}(G_{1}) < 0$ and unstable if $B_{0,1}^{(w)}(G_{1}) > 0$.

    \item Considering $G_{2} = E_{1}$ and $G_{1}$ a graph of odd order $m$, then the system (\ref{Lienard:1}) has at most $\frac{m-1}{2}$ local limit cycles.

    \item For $\epsilon \not= 0$ sufficiently small, the system (\ref{Lienard:1}) with $G_{2} = E_{1}$, and placing $\epsilon\widetilde{B}(G_{1};x)$ in the system instead of $\widetilde{B}(G_{1};x)$ with $G_{1}$ of odd order $m$, the system has at most $\frac{m-1}{2}$ limit cycles.

    \item Following item $c)$, with $G_{2} = E_{1}$ and $G_{1}$ of odd order $m$, for $\epsilon \not= 0$ sufficiently small, the system has exactly $\frac{m-1}{2}$ limit cycles which are asymptotic to circles of radius $r_{j}$, $j=1,\ldots,\frac{m-1}{2}$, centered at the origin as $\epsilon \to 0$ iff the $(\frac{m-1}{2})th$ degree polynomial

    \begin{equation}\label{poly:perko}
        \frac{1}{2}B_{0,1}^{(w)}(G_{1}) + \cdots + \binom{m+1}{\frac{m+1}{2}}\frac{\sum_{i+j=m}B_{i,j}^{(w)}(G_{1})}{2^{m+1}}\rho^{\frac{m-1}{2}} = 0
    \end{equation}

    has $\frac{m-1}{2}$ positive roots $\rho = r_{j}^{2}$, $j = 1,\ldots,\frac{m-1}{2}$.
    
\end{enumerate}
\end{theorem}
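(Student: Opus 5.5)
The plan is to reduce every item to a classical statement about the Liénard system $\dot x = y - F(x)$, $\dot y = -g(x)$, as treated in Perko's book (Section 3.8, together with the results of Lins, de Melo and Pugh and of Blows and Lloyd). To do that I first compute the two polynomials in (\ref{Lienard:1}) explicitly in terms of the coefficients $B^{(w)}_{i,j}$.

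\textbf{Step 1: $g(x)=x$.} For $G_2=E_1$ the only subsets are $\emptyset$ and $\{u_1\}$. Their weights are $1$ (the empty set) and $1$ (odd cardinality), and both have empty boundary. Hence $B_w(E_1;x,y)=1+y$ and $\widetilde B_w(E_1;x)=x$, so the second equation of (\ref{Lienard:1}) is $\dot y=-x$.

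\textbf{Step 2: the polynomial $F$.} For a graph $G_1$ of order $m$, setting $x=y$ in (\ref{weight:2}) and subtracting $1$ gives
\begin{equation*}
F(x)=\widetilde B_w(G_1;x)=\sum_{\ell=1}^{m} a_\ell x^\ell,\qquad a_\ell=\sum_{i+j=\ell}B^{(w)}_{i,j}(G_1).
\end{equation*}
By the Observation, $B^{(w)}_{\ell,0}=0$ for $\ell\neq 0$. Hence the constant term is exactly $1$ and cancels, so $F(0)=0$. In particular $a_1=B^{(w)}_{0,1}(G_1)$ (a single vertex has empty boundary contribution counted in $B_{0,1}$), and for $m=3$ we get $a_3=B^{(w)}_{1,2}+B^{(w)}_{2,1}+B^{(w)}_{0,3}$. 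The leading coefficient is $a_m=\sum_{i+j=m}B^{(w)}_{i,j}(G_1)$.

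\textbf{Step 3: item a).} With $m=3$ the system becomes $\dot x=y-(a_1x+a_2x^2+a_3x^3)$, $\dot y=-x$. The hypothesis is exactly $a_1a_3<0$. I would invoke the theorem in Perko (Section 3.8, the cubic Liénard result going back to Lins--de Melo--Pugh and Rychkov): this system has exactly one limit cycle, which is stable if $a_1<0$ and unstable if $a_1>0$. The even coefficient $a_2$ is allowed by that theorem. I expect this to be the delicate point: I must check that the cited version covers a nonzero quadratic term. If it does not, I would first remove the effect of $a_2$ by the standard argument that only the odd part of $F$ enters the divergence integral, or else cite Zhang Zhifen's uniqueness theorem for $g(x)=x$ with $F'$ of the appropriate sign pattern.

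\textbf{Step 4: item b).} This is the Blows--Lloyd / Lins--de Melo--Pugh bound on local limit cycles for $\dot x=y-F(x)$, $\dot y=-x$ with $\deg F=m$ odd. The focal values (Liapunov numbers) at the origin are, up to nonzero factors, the odd coefficients $a_1,a_3,\dots,a_m$ evaluated successively. Hence the origin is a weak focus of order at most $(m-1)/2$. A standard Hopf/Bautin bifurcation argument then shows that at most $(m-1)/2$ small-amplitude limit cycles can bifurcate.

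\textbf{Step 5: items c) and d).} I replace $F$ by $\epsilon F$ and pass to polar coordinates. The first-order Melnikov (averaging) function on the circle of radius $r$ is
\begin{equation*}
M(r)=\int_0^{2\pi}F(r\cos\theta)\cos\theta\,d\theta .
\end{equation*}
Even powers integrate to zero. For odd $\ell$,
\begin{equation*}
\int_0^{2\pi}\cos^{\ell+1}\theta\,d\theta=2\pi\binom{\ell+1}{\frac{\ell+1}{2}}2^{-(\ell+1)} .
\end{equation*}
Therefore $M(r)/(2\pi r)$ is a polynomial of degree $(m-1)/2$ in $\rho=r^2$. Its constant term is $\tfrac12 a_1=\tfrac12B^{(w)}_{0,1}$ and its top coefficient is $\binom{m+1}{\frac{m+1}{2}}a_m/2^{m+1}$, which is exactly (\ref{poly:perko}).

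The theorem of Lins--de Melo--Pugh quoted in Perko now applies. For small $\epsilon\neq0$ the limit cycles correspond to positive simple roots of this polynomial, and each cycle is asymptotic to a circle of radius $\sqrt{\rho}$. This gives both the bound $(m-1)/2$ in c) and the equivalence in d).

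The main obstacle is bookkeeping rather than analysis. I must verify that the weight conventions (in particular the fallback values $k_\ell$ and $k^\ast$ for nonexistent pairs) only feed into the coefficients $a_\ell$ and never into the constant term. Once that is checked, each item is a direct translation of the classical theorems.
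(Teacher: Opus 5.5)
Your proposal is correct and follows the same route as the paper, whose proof consists only of citing Lins--de Melo--Pugh, Blows--Lloyd, Lloyd--Lynch and Perko; your Steps 1--2 ($g(x)=x$, $F(0)=0$, $a_\ell=\sum_{i+j=\ell}B^{(w)}_{i,j}(G_1)$) make explicit the translation the paper leaves implicit, and your averaging computation correctly recovers the polynomial in item d). The one thing to drop is the fallback in Step 3: the Lins--de Melo--Pugh theorem as stated in Perko already allows an arbitrary quadratic coefficient $a_2$, and the proposed shortcut that only the odd part of $F$ enters the divergence integral is not valid in general, because the limit cycle itself depends on the even part of $F$.
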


\begin{proof}
    These are reformulated results, where a) comes from \cite{LinsDeMeloPugh1977}, items c) and d) from \cite{Perko2001}, and item b) from \cite{BlowsLloyd1984} and \cite{LloydLynch1988}.
\end{proof}

From now on the system that item c) and item d) are utilizing in Theorem (\ref{thm:1.2}), will be referred to as the $\epsilon$-system of (\ref{Lienard:1}).

Suppose we want to find a system in the form of the $\epsilon$-system of (\ref{Lienard:1}) with exactly two limit cycles asymptotic to circles of radius $r = \sqrt{\frac{1}{2}}$, $r = \sqrt{3}$. Thus, we simply set $(\rho - \frac{1}{2})(\rho - 3) = \rho^{2} -\frac{7}{2}\rho + \frac{3}{2}$ equal to (\ref{poly:perko}). That is,

$$
\rho^{2} -\frac{7}{2}\rho + \frac{3}{2} = \frac{5}{16}(\sum_{i+j = 5}B_{i,j}^{(w)}(G_{1}))\rho^{2} + \frac{3}{8}(\sum_{i+j = 3}B_{i,j}^{(w)}(G_{1}))\rho + \frac{1}{2}(\sum_{i+j = 1}B_{i,j}^{(w)}(G_{1}))
$$

Hence, we need $\sum_{i+j = 5}B_{i,j}^{(w)}(G_{1}) = \frac{16}{5}$, $\sum_{i+j = 3}B_{i,j}^{(w)}(G_{1}) = -\frac{28}{3}$, $\sum_{i+j = 1}B_{i,j}^{(w)}(G_{1}) = 3$. Which can be satisfied by considering $G_{1}$ to be the graph shown on the right-hand side of Figure 2 with $k = -\frac{211}{80}$, $k_{1} = \frac{299}{120}$.

\begin{figure}[H]
  \centering
  \begin{minipage}[t]{6cm}
    \includegraphics[width=0.7\linewidth]{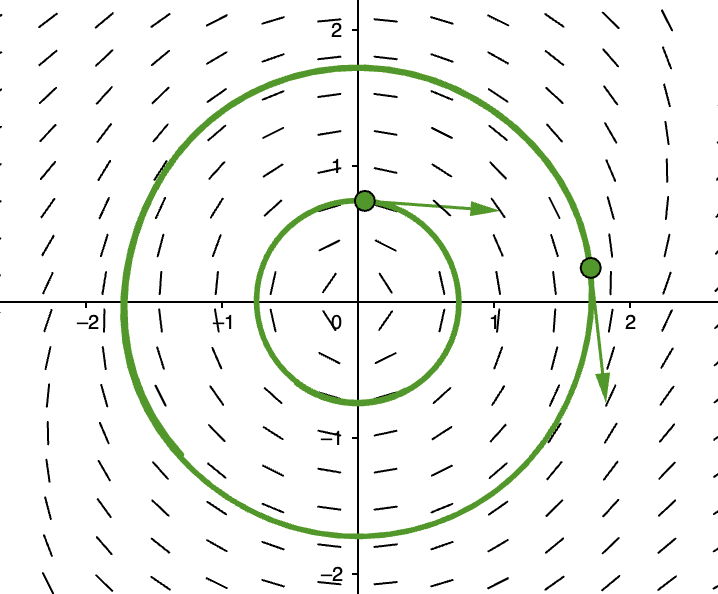}  
  \end{minipage}
%\hfill
  \begin{minipage}[t]{6cm}
   \centering
     \begin{tikzpicture}[scale=0.7, every node/.style={
      circle, draw, semithick,
      minimum size=5.0mm,
      inner sep=1pt,
      font=\small
    }
  ]
    % Vertices
    \node (v1) at (0,  2) {$v_1$};
    \node (v2) at (-2, 1) {$v_2$};
    \node (v3) at (-2,-1) {$v_3$};
    \node (v4) at (0, -2) {$v_4$};
    \node (v5) at (2, -1) {$v_5$};

    % Edge(s)
    \draw[semithick] (v1) -- (v5);
  \end{tikzpicture} 
  \end{minipage}
    \caption{Two limit cycles (left) with radius $r = \sqrt{\frac{1}{2}}$ and $r = \sqrt{3}$, respectively, and the associated graph (right).}
    \label{fig:placeholder}
\end{figure}

\newpage

\begin{flushleft}
One of the clear consequences of Theorem \ref{thm:1.2} is the following:
\end{flushleft}

\begin{corollary}\label{cor:1.3}
    If $G_{1}$ is of order $3$ and has at least one isolated vertex, then the associated limit cycle in (\ref{Lienard:1}) will always be unstable.
\end{corollary}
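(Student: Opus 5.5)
The plan is to reduce the corollary to part a) of Theorem \ref{thm:1.2}, which already tells us that the limit cycle is unstable exactly when $B_{0,1}^{(w)}(G_{1}) > 0$. The only thing to check is therefore the sign of $B_{0,1}^{(w)}(G_{1})$ when $G_{1}$ has order $3$ and at least one isolated vertex. I read the phrase ``the associated limit cycle'' as presupposing that we are in the setting of part a). That is, $G_{2}=E_{1}$ and
$$B_{0,1}^{(w)}(G_{1})\bigl(B_{1,2}^{(w)}(G_{1})+B_{2,1}^{(w)}(G_{1})+B_{0,3}^{(w)}(G_{1})\bigr)<0,$$
so that a unique limit cycle exists.

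The first step is to compute $B_{0,1}^{(w)}(G_{1})$ combinatorially. The coefficient indexes subsets $S\subseteq V(G_{1})$ with $|S|=1$ and $|B(S)|=0$. A singleton $\{v\}$ has empty boundary exactly when $v$ has no neighbours outside $\{v\}$, i.e.\ when $v$ is an isolated vertex. Hence the subsets counted are precisely the singletons of isolated vertices.

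By hypothesis at least one such subset exists. So the first branch of (\ref{weight:sum_of_weights}) applies, rather than the default values $k^{\ast}$ or $k_{\ell}$, and
$$B_{0,1}^{(w)}(G_{1})=\sum_{v \text{ isolated}} w(\{v\}).$$
Since $|\{v\}|=1$ is odd, the weight function (\ref{weight:function}) gives $w(\{v\})=1$ regardless of $k$. Therefore $B_{0,1}^{(w)}(G_{1})$ equals the number of isolated vertices of $G_{1}$, which is at least $1$, and in particular $B_{0,1}^{(w)}(G_{1})>0$.

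Finally, since $B_{0,1}^{(w)}(G_{1})>0$, part a) of Theorem \ref{thm:1.2} says that the unique limit cycle of (\ref{Lienard:1}) is unstable. This holds for every choice of the free parameters $k, k^{\ast}$ and for every graph on the other two vertices, which justifies the word ``always''.

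The only delicate point, which I expect to be the main obstacle, is making sure that the piecewise definition (\ref{weight:sum_of_weights}) does not override the count with an arbitrary constant. This is settled by noting that the existence of an isolated vertex guarantees the first case applies. A remark may also be needed that the existence hypothesis of part a) then reduces to the bracketed sum being negative.
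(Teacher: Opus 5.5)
Your proposal is correct and follows the paper's argument: the paper's proof is the one-line observation that an isolated vertex forces $B_{0,1}^{(w)}(G_{1})>0$, after which part a) of Theorem~\ref{thm:1.2} gives instability. You supply the details the paper leaves implicit. These are that singletons with empty boundary are exactly the isolated vertices, that each carries weight $1$, and that the first branch of (\ref{weight:sum_of_weights}) applies, so the default constant $k^{\ast}$ never enters.
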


\begin{proof}
    If $G_{1}$ of order three has at least one isolated vertex, then $B_{0,1}^{(w)}(G_{1}) > 0$.
\end{proof}

\begin{figure}[H]
  \centering
  % ===================== Row 1 =====================
  \resizebox{0.85\textwidth}{!}{%
    \begin{minipage}[t]{0.47\textwidth}
      \centering
      \begin{tikzpicture}[scale=0.95, every node/.style={circle, draw, semithick, minimum size=5mm, inner sep=1pt, font=\small}]
        \node (v1) at (-0.9, 0) {$v_1$};
        \node (v2) at ( 0.9, 0) {$v_2$};
        \node (v3) at ( 0.0,-1.2) {$v_3$}; % isolated
        \draw[semithick] (v1) -- (v2);
      \end{tikzpicture}
      \vspace{0.3em}\par\small $G_{1}$
    \end{minipage}\hfill
    \begin{minipage}[t]{0.47\textwidth}
      \centering
      \includegraphics[width=0.7\linewidth]{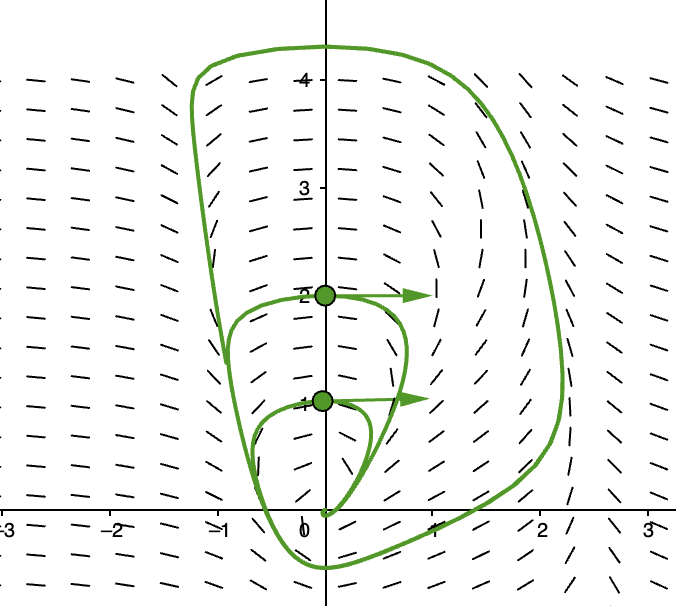}
      \vspace{0.3em}\par\small Unstable limit cycle
    \end{minipage}%
  }

  \vspace{1em} % space between rows

  % ===================== Row 2 =====================
  \resizebox{0.85\textwidth}{!}{%
    \begin{minipage}[t]{0.47\textwidth}
      \centering
      \begin{tikzpicture}[scale=0.95, every node/.style={circle, draw, semithick, minimum size=5mm, inner sep=1pt, font=\small}]
        \node (w1) at (0, 1.2) {$v_1$};
        \node (w2) at (-1.04, -0.6) {$v_2$};
        \node (w3) at ( 1.04, -0.6) {$v_3$};
        \draw[semithick] (w1) -- (w2);
        \draw[semithick] (w2) -- (w3);
        \draw[semithick] (w3) -- (w1);
      \end{tikzpicture}
      \vspace{0.3em}\par\small $G_{1}=K_{3}$
    \end{minipage}\hfill
    \begin{minipage}[t]{0.47\textwidth}
      \centering
      \includegraphics[width=0.7\linewidth]{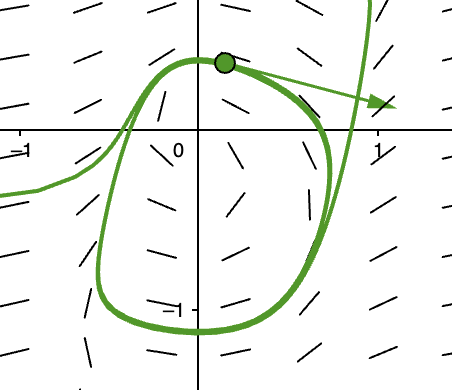}
      \vspace{0.3em}\par\small Stable limit cycle
    \end{minipage}%
  }

  % ===================== Caption =====================
  \caption{Examples of $G_{1}$ (left) associated with the stable/unstable limit cycle (right);
  \textbf{First row}: $k = 0$, $k^{\ast} = -2$;
  \textbf{Second row}: $k = 0$, $k^{\ast} = -1$, for the system~(\ref{Lienard:1}).}
  \label{fig:G1_edge_iso_image}
\end{figure}

\section{Graphs not associated to limit cycles for the Liénard system}\label{sect2}

\begin{flushleft}
Since (\ref{thm:1.2}) shows how to associate graphs to limit cycles, it can also tell us when certain graphs cannot. For example, as a consequence we have the following corollary:
\end{flushleft}

\begin{corollary}\label{cor:1.4}
    The complete graph $K_{5}$ is not associated to any two limit cycles in the $\epsilon$-system of (\ref{Lienard:1}) asymptotic to circles of radius $r_{1},r_{2}$.
\end{corollary}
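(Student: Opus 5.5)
The plan is to apply Theorem \ref{thm:1.2}\,d) with $G_{1}=K_{5}$, so $m=5$ and $\frac{m-1}{2}=2$. Writing $S_{d}=\sum_{i+j=d}B^{(w)}_{i,j}(K_{5})$, the polynomial (\ref{poly:perko}) becomes
$$P(\rho)=\tfrac{1}{2}S_{1}+\tfrac{3}{8}S_{3}\,\rho+\tfrac{5}{16}S_{5}\,\rho^{2},$$
as in the worked example following the theorem. Two limit cycles asymptotic to circles of radii $r_{1},r_{2}$ exist iff $P$ has two positive roots $\rho=r_{j}^{2}$. So I will compute $S_{1},S_{3},S_{5}$ explicitly and then show that the sign pattern of the coefficients of $P$ rules out two positive roots, whatever the real parameters $k,k_{1}$ are.

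\textbf{Step 1: boundary sizes.} In $K_{5}$, every nonempty $S$ has $B(S)=V\setminus S$, so $|B(S)|+|S|=5$. Hence the only realized pairs $(i,j)$ with $j\geq1$ lie on $i+j=5$, together with $(0,0)$ from $S=\emptyset$.
\begin{itemize}
\item On $i+j=5$, the pairs are realized, with weights from (\ref{weight:function}):
\begin{itemize}
\item $(0,5)$: one subset, odd size, contributing $1$;
\item $(1,4)$: five subsets, even size, contributing $5k$;
\item $(2,3)$: ten subsets, contributing $10$;
\item $(3,2)$: ten subsets, contributing $10k$;
\item $(4,1)$: five subsets, contributing $5$;
\item $(5,0)$: contributing $0$ by the Observation.
\end{itemize}
Thus $S_{5}=16+15k$.
\item For $d=1,3$ no subset realizes $(i,j)$ with $j\geq 1$. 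Since $n=5>3$ is odd, the second clause of (\ref{weight:sum_of_weights}) gives each such coefficient the value $\sum_{\ell=1}^{1}k_{\ell}=k_{1}$. The pairs $(d,0)$ are $0$ by the Observation. Hence $S_{1}=k_{1}$ and $S_{3}=3k_{1}$ (from $(0,3),(1,2),(2,1)$).
\end{itemize}
This bookkeeping, namely correctly deciding which clause of the weight definition applies to each pair, is the step I expect to need the most care.

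\textbf{Step 2: root analysis.} We get
$$P(\rho)=\tfrac{1}{2}k_{1}+\tfrac{9}{8}k_{1}\rho+\tfrac{5}{16}(16+15k)\rho^{2}.$$
\begin{itemize}
\item If $16+15k=0$, then $P$ has degree at most one and cannot have two positive roots.
\item Otherwise, let $a=\frac{5}{16}(16+15k)\neq 0$. Two positive roots would force both Vieta quantities to be positive:
$$\rho_{1}\rho_{2}=\frac{k_{1}}{2a}>0,\qquad \rho_{1}+\rho_{2}=-\frac{9k_{1}}{8a}>0.$$
These require $k_{1}/a>0$ and $k_{1}/a<0$ simultaneously, which is a contradiction.
\item The case $k_{1}=0$ gives the roots $0,0$, which are not positive.
\end{itemize}
Hence $P$ never has two positive roots. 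By Theorem \ref{thm:1.2}\,d), $K_{5}$ is not associated to two limit cycles in the $\epsilon$-system of (\ref{Lienard:1}), which proves the corollary.
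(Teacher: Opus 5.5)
Your proof is correct and is essentially the paper's argument. The paper matches $P$ against the monic $(\rho-r_1^2)(\rho-r_2^2)$, which pins $k=-\tfrac{64}{75}$ and then gives $k_1=2r_1^2r_2^2>0$ together with $3k_1=-\tfrac{8}{3}(r_1^2+r_2^2)<0$; this is the same sign clash between the constant and linear coefficients that your Vieta argument uses, except that you do not need to normalize the leading coefficient. The one loose end is the subcase $16+15k=0=k_1$, where $P\equiv 0$ rather than having degree at most one; it is excluded because Theorem~\ref{thm:1.2}\,d) concerns a genuine degree-two polynomial (and in that subcase $\widetilde{B}_{w}(K_{5};x)\equiv 0$, so the system is a linear center with no limit cycles).
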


\begin{proof}
    Let the two limit cycles be asymptotic to circles of radius $r_{1} = m_{1}$ and $r_{2} = m_{2}$, with $m_{1},m_{2} > 0$. Then $B_{0,1}^{(w)}(G_{1}) = 2m_{1}^{2}m_{2}^{2}$, $\sum_{i+j=3}B_{i,j}^{(w)}(G_{1}) = -\frac{8}{3}(m_{1}^{2}+m_{2}^{2})$, $\sum_{i+j=5}B_{i,j}^{(w)}(G_{1}) = \frac{16}{5}$. Since $G_{1} = K_{5}$, this implies that $k = -\frac{64}{75}$ and $2m_{1}^{2}m_{2}^{2} = -8(m_{1}^{2}+m_{2}^{2})$ which is impossible.
\end{proof}

\begin{flushleft}
    We can generalize this result in the following interesting theorem:
\end{flushleft}

\begin{theorem}\label{thm:1.5}
    The complete graph $K_{m}$ and the empty graph $E_{m}$ with $3 < m$ odd, are not associated to any $\frac{m-1}{2}$ limit cycles in the $\epsilon$-system of (\ref{Lienard:1}) asymptotic to circles of radius $r_{j}$, $j = 1,\ldots,\frac{m-1}{2}$.
\end{theorem}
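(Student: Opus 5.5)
The plan is to argue through item d) of Theorem \ref{thm:1.2}, as in Corollary \ref{cor:1.4}. Write $L=\frac{m-1}{2}$ and $s_\ell=\sum_{i+j=2\ell+1}B^{(w)}_{i,j}(G_1)$. By item d), association with $L$ limit cycles asymptotic to circles of radii $r_j$ forces the degree-$L$ polynomial (\ref{poly:perko}) to have $L$ positive roots $r_j^2$. Its coefficient of $\rho^\ell$ is
\[
c_\ell=\binom{2\ell+2}{\ell+1}\frac{s_\ell}{2^{2\ell+2}},
\]
which is a positive multiple of $s_\ell$.

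A polynomial of exact degree $L$ with all $L$ roots real and positive is a nonzero multiple of $\prod_j(\rho-r_j^2)$. Its coefficients are therefore all nonzero and strictly alternate in sign (Vieta). Hence I need
\[
s_L\neq 0 \quad\text{and}\quad s_\ell s_{\ell+1}<0 \quad\text{for } 0\le \ell<L.
\]
The whole proof consists of computing the $s_\ell$ for $K_m$ and $E_m$ from (\ref{weight:sum_of_weights}) and violating this alternation. Since $3<m$ is odd, every nonexistent pair $(i,j)$ receives the same constant $K:=\sum_{\ell=1}^{(m-3)/2}k_\ell$. By the Observation, the pairs $(\ell,0)$ with $\ell\neq 0$ are dropped, so the pairs with $i+j=2\ell+1$ are exactly $(i,2\ell+1-i)$ for $0\le i\le 2\ell$, giving $2\ell+1$ pairs.

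\textbf{Complete graph.} For $\emptyset\neq S\subsetneq V(K_m)$ we have $B(S)=V\setminus S$, so $|B(S)|+|S|=m$. The full set $V$ gives $(0,m)$. Thus no pair with $i+j<m$ is realized, and every $B^{(w)}_{i,j}$ with $i+j<m$ equals $K$. This gives $s_0=K$ and $s_1=3K$. If $K=0$, then $\rho=0$ is a root, which is not positive. Otherwise $c_0$ and $c_1$ have the same sign, contradicting alternation. This recovers Corollary \ref{cor:1.4} without reference to the particular radii.

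\textbf{Empty graph.} Here $B(S)=\emptyset$ always, so only the pairs $(0,j)$ are realized, each with $\binom{m}{j}$ subsets. For odd $j$ every such subset has weight $1$. Hence, for $0\le \ell\le L$,
\[
s_\ell=\binom{m}{2\ell+1}+2\ell K,
\]
and in particular $s_0=m>0$, $s_L=1+(m-1)K$, and $s_{L-1}=\binom{m}{2}+(m-3)K$. I split into three cases according to the sign of $K+\frac{1}{m-1}$.
\begin{enumerate}[(i)]
\item If $K=-\frac1{m-1}$, then $s_L=0$, so the degree drops and item d) fails.
\item If $K>-\frac1{m-1}$, then $s_L>0$. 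Also $(m-3)K>-\frac{m-3}{m-1}>-1$, so $s_{L-1}>\binom m2-1>0$. Two consecutive coefficients then have the same sign.
\item If $K<-\frac1{m-1}$, alternation together with $s_0>0$ forces $s_1<0$, that is $K<-\frac12\binom m3$.
\begin{itemize}
\item For $m=5$ (so $L=2$), alternation forces $s_2>0$. But $s_2=1+4K<1-20<0$.
\item For $m\ge 7$, substituting the bound on $K$ gives
\[
s_{L-1}<\binom m2\Bigl(1-\frac{(m-3)(m-2)}{6}\Bigr)<0,
\]
since $(m-3)(m-2)\ge 20>6$. Then $s_L<0$ and $s_{L-1}<0$ are consecutive coefficients of the same sign. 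Here $L-1\ge 2$, so $s_{L-1}$ is distinct from $s_0$ and $s_1$.
\end{itemize}
\end{enumerate}
In every case alternation fails.

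The main obstacle is the empty-graph case. There the free parameter $K$ can make $s_1$ negative, so the simple bottom-pair contradiction used for $K_m$ is unavailable. The way through is to play the top pair $(s_{L-1},s_L)$ against the bottom pair $(s_0,s_1)$. The care needed lies in correctly counting the nonexistent pairs with $i+j=2\ell+1$, which is where the factor $2\ell$ comes from, and in treating $m=5$ separately.
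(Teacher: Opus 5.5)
Your argument is correct, and it differs from the paper's in a way that matters.

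The paper sets (\ref{poly:perko}) equal to the monic product $\prod_i(\rho-a_i^2)$ and matches coefficients.

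\emph{Complete graph.} The paper plays $\sum_{i+j=m-2}B^{(w)}_{i,j}=(m-2)K<0$ against $\sum_{i+j=m-4}B^{(w)}_{i,j}=(m-4)K>0$. You use the bottom pair $s_0=K$, $s_1=3K$ instead. The two are equivalent, and neither is affected by rescaling the polynomial.

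\emph{Empty graph.} The paper computes $K$ once from $s_L=1+(m-1)K=2^{m+1}A_{m+1}^{-1}>0$ and once from $s_{L-1}=\binom m2+(m-3)K<0$, and finds opposite signs. This is your case (ii).

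Your Vieta formulation allows an arbitrary nonzero leading coefficient, which is the correct setting. The coefficients are fixed by the weights, so the polynomial need only equal $c_L\prod_j(\rho-r_j^2)$ with $c_L$ of either sign. For $E_m$ this sign cannot be normalized away. Since $s_0=m>0$, you get $\mathrm{sgn}\,c_L=(-1)^{(m-1)/2}$. So when $(m-1)/2$ is odd the leading coefficient must be negative, and the paper's monic matching never examines that situation. In that case the paper's setup would give $B^{(w)}_{0,1}<0$, which contradicts $B^{(w)}_{0,1}(E_m)=m$ immediately but says nothing about the actual candidate polynomials. Your case (iii), combining $s_1<0$ with $s_{L-1}<0$, covers exactly this gap. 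You also explicitly dispose of the degenerate cases $K=0$ and $s_L=0$.

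What the paper's coefficient matching offers in exchange is explicit formulas, such as the value of $k$ forced for $K_m$. These mirror the constructive example earlier in the paper, and your sign argument does not need them.
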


\begin{proof}
    Consider the case $G_{1} = K_{m}$ with $3 < m$ odd. Suppose we want to find the exact $\frac{m-1}{2}$ corresponding limit cycles asymptotic to circles of radius $r_{1} = a_{1}$, $r_{2} = a_{2}, \ldots$, $r_{\frac{m-1}{2}} = a_{\frac{m-1}{2}}$ where $a_{1},a_{2},\ldots,a_{\frac{m-1}{2}} > 0$. Then we consider,
    
    $$\prod_{i=1}^{\frac{m-1}{2}}(\rho-a_{i}^{2}) = \rho^{\frac{m-1}{2}} - (\sum_{i}a_{i}^{2})\rho^{\frac{m-3}{2}} + (\sum_{i < j}a_{i}^{2}a_{j}^{2})\rho^{\frac{m-5}{2}} - (\sum_{i<j<k}a_{i}^{2}a_{j}^{2}a_{k}^{2})\rho^{\frac{m-7}{2}} + \cdots + (-1)^{\frac{m-1}{2}}\prod_{i}a_{i}^{2},$$

to be equal to the following,

    $$\frac{1}{2}B_{0,1}^{(w)}(G_{1}) + \cdots + A_{m-1}\frac{\sum_{i+j=m-2}B_{i,j}^{(w)}(G_{1})}{2^{m-1}}\rho^{\frac{m-3}{2}} + A_{m+1}\frac{\sum_{i+j=m}B_{i,j}^{(w)}(G_{1})}{2^{m+1}}\rho^{\frac{m-1}{2}},$$

where $A_{d} = \binom{d}{\frac{d}{2}}$. Then we have, 

$$\sum_{i+j = m}B_{i,j}^{(w)}(G_{1}) = 2^{m+1} A_{m+1}^{-1},$$ 

$$\sum_{i+j = m-2}B_{i,j}^{(w)}(G_{1}) = -2^{m-1} A_{m-1}^{-1}(\sum_{i}a_{i}^{2}),$$ 

$$\sum_{i+j = m-4}B_{i,j}^{(w)}(G_{1}) = 2^{m-3}A_{m-3}^{-1}(\sum_{i < j}a_{i}^{2}a_{j}^{2}),$$

$$\vdots$$

$$B_{0,1}^{(w)}(G_{1}) = 2(-1)^{\frac{m-1}{2}}\prod_{i}a_{i}^{2},$$

where it yields,

$$k = \left(2^{m+1}A_{m+1}^{-1} - \sum_{j=0}^{\frac{m-1}{2}}\binom{m}{m-2j}\right)\left(\sum_{j=0}^{\frac{m-3}{2}}\binom{m}{m-(2j+1)}\right)^{-1},$$

and the following is impossible,

$$-2^{m-1}(m-2)^{-1}A_{m-1}^{-1}\left(\sum_{i}a_{i}^{2}\right) = 2^{m-3}(m-4)^{-1}A_{m-3}^{-1}\left(\sum_{i < j}a_{i}^{2}a_{j}^{2}\right).$$

Now, considering the case $G_{1} = E_{m}$ and following the same structure above also gives us,

$$\frac{2^{m+1}A_{m+1}^{-1}-1}{m-1} = \frac{-2^{m-1}A_{m-1}^{-1}(\sum_{i}a_{i}^{2})-\frac{m(m-1)}{2}}{m-3},$$

which is impossible since positive $\not=$ negative.
\end{proof}

\section{Stability and Hopf Bifurcation}\label{sect3}

\begin{flushleft}
For most of the results we have been talking about odd order graphs and the relationship they have with limit cycles. But we have not mention anything concerning even order graphs and their possible relationship with limit cycles. This actually depends on the type of system we construct using either the boundary polynomial or the weighted boundary polynomial. For the rest of the work, everything is mostly developed with the boundary polynomials, but we use the weighted boundary polynomial when necessary. Necessecity in this case means negative coefficients that (\ref{Poly1}) cannot give. Next, I want to present a system that gives us the possibility to associate complete graphs of even order to a stable limit cycle. Consider the system,
\end{flushleft}

\begin{equation}\label{system:1}
    \frac{dx}{dt} = -B(G;x,y)y + x(1 - x^{2} - y^{2}), \frac{dy}{dt} = B(G;x,y)x + y(1 - x^2 - y^2),
\end{equation}

\begin{flushleft}
where $G$ is a graph of order $n$. This system (\ref{system:1}) is not new. In fact, many similar systems have been used through the years to emphasize on the stability and instability of periodic solutions. But (\ref{system:1}) allows us to say the following:
\end{flushleft}

\begin{proposition}
If $G$ is the family of complete graphs with even order, then the unit circle is a stable limit cycle for (\ref{system:1}).
\end{proposition}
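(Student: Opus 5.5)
The natural approach is to pass to polar coordinates $x=r\cos\theta$, $y=r\sin\theta$, which decouples the radial dynamics from the angular one. From (\ref{system:1}) we compute $r\dot r = x\dot x + y\dot y$. The terms $-B(G;x,y)xy + B(G;x,y)xy$ cancel, so
\begin{equation*}
\dot r = r(1-r^{2}).
\end{equation*}
Likewise $r^{2}\dot\theta = x\dot y - y\dot x$. Here the $x y(1-r^{2})$ terms cancel, giving
\begin{equation*}
\dot\theta = B(G;x,y) = B(G; r\cos\theta, r\sin\theta).
\end{equation*}
On the unit circle $r\equiv 1$ is invariant, and every trajectory with $r(0)>0$ satisfies $r(t)\to 1$ monotonically. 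So the unit circle is attracting from both sides.

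It remains to show that the unit circle is a genuine periodic orbit, i.e.\ contains no equilibria. This is where the hypothesis on $G$ enters: we need $B(G;\cos\theta,\sin\theta)\neq 0$ for all $\theta$. By item i) of the formulas for complete graphs,
\begin{equation*}
B(K_{n};x,y)=(x+y)^{n}+1-x^{n}.
\end{equation*}
I would show this is strictly positive on the unit circle when $n$ is even.
\begin{itemize}
\item $(x+y)^{n}\ge 0$ since $n$ is even.
\item $1-x^{n}\ge 0$ since $|x|=|\cos\theta|\le 1$.
\item Equality in both would require $x+y=0$ and $|x|=1$. This forces $y=\mp 1$ with $x^{2}+y^{2}=2$, which is impossible on the unit circle.
\end{itemize}
Hence $\dot\theta=B(K_n;\cos\theta,\sin\theta)\ge c>0$ by compactness. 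The circle is therefore traversed in finite time $T=\int_0^{2\pi}d\theta/B$, giving a periodic orbit. Since nearby trajectories spiral toward it ($\dot r>0$ for $0<r<1$, $\dot r<0$ for $r>1$), it is isolated and hence a stable limit cycle.

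The main obstacle is this positivity step, which is also why even order is needed. For odd $n$, the term $(x+y)^n$ becomes negative near $\theta=-\pi/4$. There $x+y=0$, and at nearby points the value could vanish, since for example $n=1$ gives $B=1+y$, which is zero at $(0,-1)$. So the argument genuinely uses evenness.

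As a supplementary check on stability, one could compute the characteristic exponent. Since $\dot r=r(1-r^2)$, the linearization at $r=1$ is $-2$, so the Floquet exponent $-2$ (or equivalently $\int_0^T \operatorname{div} f\,dt<0$, which for general $G$ requires the $B$-derivative terms) confirms hyperbolic stability. The polar argument alone suffices, however.
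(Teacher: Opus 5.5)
Your proof is correct, but it argues differently from the paper. The paper asserts without proof that $B(K_n;x,y)$ cannot vanish on the unit circle for even $n$; it only checks the odd case, $B(K_n;0,-1)=(-1)^n+1=0$. It then takes $\gamma(t)=(\cos t,\sin t)$ as the periodic solution and concludes stability from the Poincar\'e-map criterion, claiming the divergence integral along $\gamma$ over $[0,2\pi]$ equals $-4\pi<0$. You instead decouple $\dot r=r(1-r^{2})$ in polar coordinates. You also actually prove that $B(K_n;\cos\theta,\sin\theta)>0$ for even $n$, which is the only place the hypothesis is used. Your route gives more than the paper's: every trajectory with $r(0)>0$ is attracted, not only those near the circle, and no theorem about the Poincar\'e map is needed. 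It also handles the angular speed $\dot\theta=B$ correctly. The paper's $\gamma$ is a genuine solution only if $B\equiv 1$ on the circle. Along the true orbit the divergence equals $-2+\frac{d}{dt}\ln B(\gamma(t))$, so its integral over one period is $-2T$ with $T=\int_0^{2\pi}d\theta/B$, not $-4\pi$. This is exactly the characteristic exponent $-2$ in your supplementary remark. The sign, and hence the paper's conclusion, survives. What the Poincar\'e-map route offers in return is hyperbolicity of the cycle, which your remark also supplies. One small point in your side comment on odd $n$: the clean witness is $(0,-1)$, i.e.\ $\theta=-\pi/2$, where $B(K_n;0,-1)=(-1)^n+1=0$ for every odd $n$. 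The vanishing is not tied to the sign change of $(x+y)^n$ near $\theta=-\pi/4$.
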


\begin{proof}
Note that $B(G = K_{n};x,y)$ can't be zero on the unit circle and thus if $n$ would have been odd, then we have $B(G = K_{n};0,-1) = -1 + 1 = 0$. Thus, we continue by considering $n$ to be even. We have that $\gamma(t) = (\cos(t),\sin(t))^{T}$ is a periodic solution to $(\ref{system:1})$. The derivative of the Poincare map along a straight line normal to $\{\mathbf{x} \in \mathbf{R}^{2}: \mathbf{x} = \gamma(t) - \gamma(0), 0 \leq t \leq 2\pi\}$ at $\mathbf{x} = 0$ is given by $e^{\int_{0}^{2\pi}(\frac{\partial}{\partial x}(\frac{dx}{dt}) + \frac{\partial}{\partial y}(\frac{dy}{dt}))(\gamma(t))dt}$. But since $\int_{0}^{2\pi}(\frac{\partial}{\partial x}(\frac{dx}{dt}) + \frac{\partial}{\partial y}(\frac{dy}{dt}))(\gamma(t))dt = -4\pi < 0$, then the periodic solution is a stable limit cycle.
\end{proof}

\begin{flushleft}
In fact, the unit circle will always be a periodic solution to (\ref{system:1}) as long as $B(G;x,y) \not= 0$ on the unit circle, for a chosen graph $G$. We continue by constructing a boundary polynomial based system such that the limit cycle is stable independent of the graph chosen. That is, we can consider:
\end{flushleft}

\begin{equation}\label{system:2}
\frac{dx}{dt} = -y + B(G;x,y)(x - x^{3} - x y^{2}), \frac{dy}{dt} = x + B(G;x,y)(y - x^{2}y - y^{3}),
\end{equation}

\begin{flushleft}
for any graph $G$ of finite order. And is easy to see that the unit circle is always a stable limit cycle for (\ref{system:2}) no matter the choice of $G$. To make it official, let's state it in the following proposition.
\end{flushleft}

\begin{proposition}
The unit circle is always a stable limit cycle for the system (\ref{system:2}) independent of the choice of $G$.
\end{proposition}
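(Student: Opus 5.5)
The plan is to pass to polar coordinates to show that the unit circle is a periodic orbit, and then to compute the derivative of the Poincaré map through the divergence integral, exactly as in the proof of the previous proposition.

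\emph{Step 1: the unit circle is a periodic orbit.} Write $r^{2}=x^{2}+y^{2}$. Then (\ref{system:2}) becomes
$$\frac{dx}{dt}=-y+B(G;x,y)\,x(1-r^{2}),\qquad \frac{dy}{dt}=x+B(G;x,y)\,y(1-r^{2}).$$
Using $r\dot r=x\dot x+y\dot y$ and $r^{2}\dot\theta=x\dot y-y\dot x$, this gives
$$\dot r=B(G;x,y)\,r(1-r^{2}),\qquad \dot\theta=1.$$
Hence the circle $r=1$ is invariant, and because $\dot\theta=1$ it is traversed by $\gamma(t)=(\cos t,\sin t)^{T}$ with period $2\pi$. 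Unlike (\ref{system:1}), this needs no non-vanishing hypothesis on $B$.

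\emph{Step 2: the divergence on the circle.} For the perturbative part $B\,x(1-r^{2})$ we have
$$\frac{\partial}{\partial x}\bigl(Bx(1-r^{2})\bigr)=B_{x}\,x(1-r^{2})+B(1-r^{2})-2Bx^{2},$$
and the analogous formula holds for $y$. The rotational part $(-y,x)$ is divergence free. On $r=1$ every term carrying the factor $(1-r^{2})$ vanishes, so the divergence along $\gamma$ is $-2B(\cos t,\sin t)$. The Poincaré map derivative is therefore
$$\exp\Bigl(-2\int_{0}^{2\pi}B(G;\cos t,\sin t)\,dt\Bigr).$$

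\emph{Step 3: the sign of $\int_{0}^{2\pi}B(G;\cos t,\sin t)\,dt$.} This is the main obstacle, since $B$ itself can vanish or even be negative on the circle; for instance $B(E_{1};0,-1)=0$. The plan is to integrate term by term using (\ref{Poly1}): $B(G;x,y)=\sum_{S}x^{|B(S)|}y^{|S|}$ is a sum of monomials with coefficient $1$.
\begin{itemize}
\item For each monomial, $\int_{0}^{2\pi}\cos^{a}t\,\sin^{b}t\,dt$ vanishes unless $a$ and $b$ are both even.
\item When $a$ and $b$ are both even the integrand is nonnegative and not identically zero, so the integral is strictly positive.
\item The empty set $S=\emptyset$ contributes the monomial $1$, whose integral is $2\pi$.
\end{itemize}
Hence $\int_{0}^{2\pi}B\,dt\ge 2\pi>0$, and the Poincaré map derivative is at most $e^{-4\pi}<1$.

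\emph{Step 4: conclusion.} The periodic orbit $\gamma$ is hyperbolic with negative characteristic exponent. It is therefore isolated among periodic orbits, and nearby trajectories spiral onto it, so it is a stable limit cycle. No property of $G$ was used beyond the nonnegativity of the coefficients of $B$ and the constant term coming from $S=\emptyset$, which gives independence from the choice of $G$.
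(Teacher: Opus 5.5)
Your proof is correct and follows the same route as the paper. Both arguments reduce stability to the sign of $\int_{0}^{2\pi}B(G;\cos t,\sin t)\,dt$, which both bound below by $2\pi$ using the nonnegative coefficients of $B$, the vanishing of $\int_{0}^{2\pi}\cos^{a}t\sin^{b}t\,dt$ when $a$ or $b$ is odd, and the constant term contributed by $S=\emptyset$. Your version also makes explicit two points the paper only asserts: the polar-coordinate check that the unit circle is a periodic orbit, and the exact Poincaré-map derivative $\exp\bigl(-2\int_{0}^{2\pi}B\,dt\bigr)$, whose factor $-2$ is what turns a positive integral into stability.
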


\begin{proof}
Assume $G$ is of order $n$. The respective derivative of the Poincare map depends on $\int_{0}^{2\pi}B(G;\cos(t),\sin(t))dt$. Thus we have,

$$\int_{0}^{2\pi}B(G;\cos(t),\sin(t))dt = \sum_{0 \leq i + j \leq n}B_{i,j}(G)\int_{0}^{2\pi}\cos^{i}(t)\sin^{j}(t)dt,$$

where we know that $\int_{0}^{2\pi}\cos^{i}(t)\sin^{j}(t)dt = 0$ if at least one of $i,j$ is odd and $\int_{0}^{2\pi}\cos^{i}(t)\sin^{j}(t)dt > 0$ if both $i,j$ are even. Thus,

$$\int_{0}^{2\pi}B(G;\cos(t),\sin(t))dt = \sum_{0 \leq i + j \leq n}B_{i,j}(G)\int_{0}^{2\pi}\cos^{i}(t)\sin^{j}(t)dt \geq B_{0,0}(G)\int_{0}^{2\pi}dt = 2\pi > 0,$$
which implies the periodic solution at the unit circle is always a stable limit cycle.
\end{proof}

\begin{flushleft}
We now turn our attention to one of the most fundamental bifurcation scenarios concerning limit cycles, the Hopf bifurcation theory for planar analytic systems. In classical operations, this phenomenon is typically analyzed via eigenvalue crossings, centermanifold reductions, and normalform computations. In this work, we reinterpret the Hopf bifurcation conditions through the lens of the boundary polynomial. In particular, by expressing the criticality, transversal-crossing, and amplitude-growth conditions in terms of the boundary polynomial representation, we obtain a streamlined re‐derivation of the standard Hopf conditions while also revealing new structural insights into the parameter dependence of the oscillatory branch. We present the classical Hopf Theorem, reformulate it in the boundary‐polynomial framework, and then demonstrate how this approach naturally leads to our subsequent analysis. In general, a Hopf bifurcation occurs where a periodic orbit is created as the stability of the equilibrium point changes. Consider the system,
\end{flushleft}

\begin{equation}\label{system:3}
\frac{dx}{dt} = \alpha x - y + B(G_{1};x,y) - 1, \frac{dy}{dt} = x + \alpha y + B(G_{2};x,y) - 1,
\end{equation}

\begin{flushleft}
where $G_{1}$, $G_{2}$ are of order $n_{1},n_{2} \geq 2$, respectively, and do not contain isolated vertices. In (\ref{system:3}) for $\alpha = 0$, the Jacobian matrix evaluated at the origin has a pair of purely imaginary eigenvalues, and the origin is a weak focus. Furthermore, $e^{2\pi\alpha}$ is the derivative of the Poincare map at $0$. We now present a very useful result concerning the respective Liapunov number,
\end{flushleft}

\begin{proposition}\label{prop:3}
Consider $G_{1}$,$G_{2}$ to be graphs of order $n_{1},n_{2} \geq 2$, respectively, and do not contain isolated vertices. Then for $\alpha = 0$, the system,
\begin{align*}
\frac{dx}{dt} &= \alpha x - y + B(G_{1};x,y) - 1, \\
\frac{dy}{dt} &= x + \alpha y + B(G_{2};x,y) - 1,
\end{align*}
has Liapunov number $\sigma_{B}$ given by,
\begin{equation}\label{Lyapunov_number}
\begin{split}
\sigma_{B} = \frac{3\pi}{2}\Big(
 & 3B_{0,3}(G_{2}) + B_{1,2}(G_{1}) + B_{2,1}(G_{2}) \\
 & +\, 2B_{0,2}(G_{1})B_{0,2}(G_{2})
   + B_{1,1}(G_{1})B_{0,2}(G_{1})
   - B_{1,1}(G_{2})B_{0,2}(G_{2})
\Big)
\end{split}
\end{equation}
\end{proposition}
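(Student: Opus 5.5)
The plan is to reduce the statement to the classical formula for the first Liapunov number of a planar analytic system with a weak focus, as given in \cite{Perko2001}, and then read off the relevant Taylor coefficients from the boundary polynomials. For a system of the form
\begin{align*}
\dot x &= -y + \sum_{i+j\ge 2} a_{ij}x^{i}y^{j},\\
\dot y &= x + \sum_{i+j\ge 2} b_{ij}x^{i}y^{j},
\end{align*}
I will use the classical expression
\begin{equation*}
\begin{split}
\sigma=\frac{3\pi}{2}\Big(&3(a_{30}+b_{03})+(a_{12}+b_{21})-2(a_{20}b_{20}-a_{02}b_{02})\\
&+a_{11}(a_{02}+a_{20})-b_{11}(b_{02}+b_{20})\Big).
\end{split}
\end{equation*}
This is the specialization of Perko's formula for $\dot x=ax+by+p(x,y)$, $\dot y=cx+dy+q(x,y)$ to $a=d=0$, $b=-1$, $c=1$. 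The first step is to record it in exactly this normalization, with $a_{ij}$ the coefficient of $x^iy^j$, since the sign and index conventions are where errors creep in.

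The second step is to identify the coefficients. By \eqref{Poly2}, $B(G;x,y)-1=\sum_{(i,j)\neq(0,0)}B_{i,j}(G)x^iy^j$, where the exponent of $x$ is $|B(S)|$ and the exponent of $y$ is $|S|$. Hence $a_{ij}=B_{i,j}(G_1)$ and $b_{ij}=B_{i,j}(G_2)$.

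Next I check that system \eqref{system:3} at $\alpha=0$ really has linear part $(-y,x)$. This requires $B_{1,0}=B_{0,1}=0$ for both graphs, and both hold:
\begin{itemize}
\item $B_{i,0}(G)=0$ for $i\ge1$, since $S=\emptyset$ has empty boundary (this is also the Observation).
\item $B_{0,1}(G)=0$, because a single vertex has empty boundary only if it is isolated, and the hypothesis excludes isolated vertices.
\end{itemize}
The same observation gives $a_{20}=b_{20}=a_{30}=b_{30}=0$. The hypothesis $n_1,n_2\ge 2$ together with the absence of isolated vertices ensures the graphs are nontrivial, so the quadratic and cubic coefficients are the genuine counts $B_{i,j}$.

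Substituting into the formula, $3(a_{30}+b_{03})$ becomes $3B_{0,3}(G_2)$, and $a_{12}+b_{21}$ becomes $B_{1,2}(G_1)+B_{2,1}(G_2)$. The term $-2(a_{20}b_{20}-a_{02}b_{02})$ becomes $2B_{0,2}(G_1)B_{0,2}(G_2)$. The terms $a_{11}(a_{02}+a_{20})$ and $-b_{11}(b_{02}+b_{20})$ become $B_{1,1}(G_1)B_{0,2}(G_1)$ and $-B_{1,1}(G_2)B_{0,2}(G_2)$. Together these give exactly \eqref{Lyapunov_number}.

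The main obstacle is purely bookkeeping: fixing Perko's convention precisely (orientation, the factor $3\pi/2$, and the sign of the $a_{20}b_{20}-a_{02}b_{02}$ term), and keeping straight that in $B_{i,j}$ the first index belongs to $x$, not to $|S|$. If there is doubt about the normalization, I would cross-check it on a simple test case, for instance the system with only a cubic term $a_{30}x^3$ added, where the focus stability is easy to verify directly.
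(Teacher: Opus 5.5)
Your proposal is correct and follows the same route as the paper, which gives no separate argument and simply presents $\sigma_B$ as Perko's Section 4.4 Liapunov number formula specialized to $a=d=0$, $b=-1$, $c=1$, with $a_{ij}=B_{i,j}(G_1)$ and $b_{ij}=B_{i,j}(G_2)$. Your explicit checks supply details the paper leaves implicit: $B_{i,0}=0$ for $i\ge 1$ kills $a_{20},b_{20},a_{30}$, and $B_{0,1}=0$ (no isolated vertices) gives the linear part $(-y,x)$.
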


This is a very nice boundary-polynomial-dependent reformulation of the result given in Chapter 4.4 of \cite{Perko2001}. Note that $\sigma_{B} \not= 0$ implies the origin is a weak focus of multiplicity one, and if $\sigma_{B} > 0$ or $\sigma_{B} < 0$, it is stable or unstable, respectively, and a Hopf bifurcation occurs at the origin at the bifurcation value $\alpha = 0$. The following is the Hopf bifurcation theorem as stated in \cite{Perko2001},

\begin{theorem}\label{thm:3.1}
If $\sigma_{B} \not= 0$, then a Hopf bifurcation occurs at the origin of the planar analytic system (\ref{system:3}) at the bifurcation value $\alpha = 0$; in particular, if $\sigma_{B} < 0$, then a unique stable limit cycle bifurcates from the origin of (\ref{system:3}) as $\alpha$ increases from zero and if $\sigma_{B} > 0$, then a unique unstable limit cycle bifurcates from the origin of (\ref{system:3}) as $\alpha$ decreases from zero.
\end{theorem}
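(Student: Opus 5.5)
The plan is to reduce Theorem \ref{thm:3.1} to the classical Hopf bifurcation theorem for planar analytic systems in Chapter 4.4 of \cite{Perko2001}. The first step is to put system (\ref{system:3}) into the form required there:
$$\dot x = \mu x - y + P(x,y),\qquad \dot y = x + \mu y + Q(x,y),$$
with $\mu=\alpha$, $P = B(G_1;x,y)-1$ and $Q = B(G_2;x,y)-1$.

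I then have to check that $P$ and $Q$ are analytic and begin with terms of order at least two.
\begin{itemize}
\item Analyticity is immediate, since both are polynomials.
\item The constant term vanishes because $B_{0,0}=1$, which is the contribution of $S=\emptyset$.
\item For the linear terms, a coefficient $B_{1,0}$ cannot arise, because a set with $|S|=0$ has empty boundary. A coefficient $B_{0,1}$ counts singletons with empty boundary, that is, isolated vertices.
\end{itemize}
The hypothesis that $G_1$ and $G_2$ have no isolated vertices therefore kills every linear term. Hence $P,Q = O(|(x,y)|^2)$, and the linearization at the origin is $\begin{bmatrix}\alpha & -1\\ 1 & \alpha\end{bmatrix}$, with eigenvalues $\alpha \pm i$. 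The orders $n_1,n_2\ge 2$ ensure the quadratic and cubic coefficients used in $\sigma_B$ are the genuine ones.

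The second step is to verify the transversality hypothesis directly. The real part of the eigenvalues is $\alpha$, so its derivative with respect to $\alpha$ is $1\neq 0$. At $\alpha=0$ the eigenvalues are $\pm i$, so the origin is a weak focus. The multiplicity-one condition, a nonzero first Liapunov number at $\alpha=0$, is exactly the hypothesis $\sigma_B\neq 0$ by Proposition \ref{prop:3}. That proposition identifies $\sigma_B$ with Perko's Liapunov number $\sigma$ computed from the coefficients $a_{ij},b_{ij}$ of $P,Q$. Under this identification $a_{ij}$ becomes $B_{i,j}(G_1)$ and $b_{ij}$ becomes $B_{i,j}(G_2)$, with the cubic and quadratic monomials matched.

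The third step is to invoke Perko's theorem. If $\sigma\ne0$, a Hopf bifurcation occurs at $\mu=0$. If $\sigma<0$, a unique stable limit cycle bifurcates as $\mu$ increases from zero, and if $\sigma>0$, a unique unstable limit cycle bifurcates as $\mu$ decreases. Substituting $\sigma=\sigma_B$ and $\mu=\alpha$ gives the statement verbatim.

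The main obstacle I expect is the identification in the second step. I must make sure Proposition \ref{prop:3} really is Perko's formula with the same normalization and sign convention; Perko's formula carries a factor $\frac{3\pi}{2b}$ with $b=1$ here. If there is any doubt, I would recompute $\sigma$ from Perko's explicit expression in terms of $a_{20},a_{11},a_{02},b_{20},\dots,b_{03}$. I would first note that the $x$-exponent in $B(G;x,y)$ counts boundary size, so the $y$-power counts $|S|$, and then match each monomial's coefficient. This recomputation would also confirm that the sign convention matches the stability description in Theorem \ref{thm:3.1}, namely $\sigma_B<0$ giving a stable cycle.
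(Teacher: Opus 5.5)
Your proposal is correct and follows the paper's route. The paper gives no independent argument: it presents Theorem~\ref{thm:3.1} as Perko's Hopf theorem applied to system~(\ref{system:3}) with $\mu=\alpha$, using Proposition~\ref{prop:3} to identify $\sigma_B$ with Perko's Liapunov number. Your check that the absence of isolated vertices removes the constant and linear terms of $B(G_i;x,y)-1$ is exactly the hypothesis verification that citation implicitly requires. The recomputation you propose would also succeed: no pure power $x^i$ with $i\ge 1$ occurs, so $a_{20}=a_{30}=b_{20}=b_{30}=0$, and Perko's $\sigma$ for $\dot x=-y+P$, $\dot y=x+Q$ reduces precisely to $\sigma_B$, with the same sign convention.
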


\begin{remark}\label{remark:3.2}
If $\sigma_{B} < 0$, then it is called a supercritical Hopf bifurcation and if $\sigma_{B} > 0$, then it is called a subcritical Hopf bifurcation.
\end{remark}

The following corollaries present consequences of Proposition (\ref{prop:3}) and Theorem (\ref{thm:3.1}). But before diving into the results, we need to clarify some classical graphs and notation. $S_{n}$ represents the star graph of order $n$. The star graph is a tree on $n$ vertices with one vertex having degree $n - 1$ and the other $n - 1$ vertices having degree $1$. $P_{n}$ denotes the path graph, which is a tree with two vertices of degree $1$, and the other $n - 2$ vertices of degree $2$. $C_{n}$ is the cycle graph, which is a graph on $n$ vertices containing a single cycle through all vertices. $W_{n}$ represents the wheel graph, which is a graph that contains a cycle of order $n - 1$ and for which every graph vertex in the cycle is connected to one other graph vertex. $G \sqcup Q$ denotes the disjoint union between graphs $G$ and $Q$. The notation $kG$ represents the disjoint union of $k$ copies of $G$, i.e., $kG = G \sqcup G \sqcup G \sqcup \cdots \sqcup G$, $k$ times.

\begin{corollary}\label{cor:3.3}
Let $G_{1} = K_{n}$ with $n \geq 3$, and $G_{2} = S_{m} \sqcup P_{2}$, where $m = 2$ or $m \geq 4$. Then there is a unique stable limit cycle that bifurcates from the origin. Furthermore, we can generalize it to $G_{2} = S_{m} \sqcup kP_{2}$ and the result will hold for any $m \geq 2$ as long as we choose $k \geq 2$.
\end{corollary}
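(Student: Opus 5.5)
The plan is to evaluate the Liapunov number $\sigma_B$ of Proposition \ref{prop:3} directly for these graphs, show it is strictly negative, and then apply Theorem \ref{thm:3.1}. With $\sigma_B<0$, that theorem gives a supercritical Hopf bifurcation, so a unique stable limit cycle bifurcates from the origin as $\alpha$ increases from zero. The hypotheses of Proposition \ref{prop:3} hold: $K_n$ with $n\ge 3$, and $S_m\sqcup kP_2$ with $m\ge 2$, have order at least $2$ and no isolated vertices. Throughout, $B(S)$ is the set of vertices outside $S$ adjacent to $S$. So $B_{i,j}$ counts $j$-subsets with exactly $i$ outside neighbours.

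\textbf{Coefficients for $G_1=K_n$.} A nonempty $j$-subset of $K_n$ has boundary of size $n-j$. Hence for $n\ge 3$ we get $B_{0,2}(K_n)=B_{1,1}(K_n)=0$. These would require $n=2$. Also $B_{1,2}(K_n)=3$ if $n=3$ and $0$ if $n\ge 4$. In particular $0\le B_{1,2}(G_1)\le 3$, and both products involving $G_1$ in \eqref{Lyapunov_number} vanish. This leaves
\[
\sigma_B=\tfrac{3\pi}{2}\bigl(3B_{0,3}(G_2)+B_{1,2}(G_1)+B_{2,1}(G_2)-B_{1,1}(G_2)B_{0,2}(G_2)\bigr).
\]

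\textbf{Coefficients for $G_2=S_m\sqcup kP_2$.} Each coefficient has a simple description:
\begin{itemize}
\item $B_{1,1}$ counts vertices of degree $1$.
\item $B_{2,1}$ counts vertices of degree $2$.
\item $B_{0,2}$ and $B_{0,3}$ count unions of whole components of total size $2$ and $3$.
\end{itemize}
The values are as follows.
\begin{itemize}
\item For $m\ge 4$: $B_{1,1}=m-1+2k$, $B_{0,2}=k$, and $B_{2,1}=B_{0,3}=0$.
\item For $m=3$: the centre has degree $2$ and $S_3$ is a closed $3$-set. So $B_{2,1}=B_{0,3}=1$, $B_{1,1}=2+2k$ and $B_{0,2}=k$.
\item For $m=2$: $S_2=P_2$, so $B_{1,1}=2+2k$, $B_{0,2}=k+1$, and $B_{2,1}=B_{0,3}=0$.
\end{itemize}

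\textbf{Sign of $\sigma_B$.} For $k=1$ we use $B_{1,2}(G_1)\le 3$.
\begin{itemize}
\item If $m\ge 4$, the bracket is at most $3-(m+1)\le -2<0$.
\item If $m=2$, the bracket is at most $3-4\cdot 2=-5<0$.
\item If $m=3$, the bracket equals $B_{1,2}(G_1)\ge 0$, which explains why $m=3$ is excluded.
\end{itemize}
For $k\ge 2$ and any $m\ge 2$, the worst case is $m=3$. There the bracket is at most $3+3+1-(2+2k)k\le 7-12<0$. The other cases are already negative and become more negative as $k$ grows. So $\sigma_B<0$ in every case claimed, and Theorem \ref{thm:3.1} together with Remark \ref{remark:3.2} finishes the proof.

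The main obstacle is bookkeeping. We must get the boundary counts exactly right, especially the coincidence $S_2=P_2$ and the degree-$2$ centre of $S_3$. We also need the convention that $B(S)$ is the outer boundary. I would double-check these small cases by listing subsets by hand.
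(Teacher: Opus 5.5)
Your proposal is correct and follows essentially the same route as the paper: use $B_{0,2}(K_n)=B_{1,1}(K_n)=0$ to eliminate the $G_1$-product terms, then compute $B_{0,3},B_{2,1},B_{1,1},B_{0,2}$ for $S_m\sqcup kP_2$ separately in the cases $m=2$, $m=3$ and $m\ge 4$, check the sign of the bracket, and apply Theorem \ref{thm:3.1}. The only differences are cosmetic: you use the uniform bound $B_{1,2}(K_n)\le 3$ rather than splitting into $n=3$ and $n>3$, and you explicitly verify the no-isolated-vertex hypothesis of Proposition \ref{prop:3}, which the paper leaves implicit.
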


\begin{proof}
For $G_{1} = K_{n}$, $n \geq 3$, we have,
$$\sigma_{B} = \frac{3\pi}{2}(3B_{0,3}(G_{2}) + B_{1,2}(G_{1}) + B_{2,1}(G_{2}) - B_{1,1}(G_{2})B_{0,2}(G_{2})).$$
Let's consider $G_{2} = S_{2} \sqcup P_{2}$, for which then we have $B_{0,3}(G_{2}) = B_{2,1}(G_{2}) = 0$, $B_{1,1}(G_{2}) = 4$, $B_{0,2}(G_{2}) = 2$, and thus $\sigma_{B} = \frac{3\pi}{2}(B_{1,2}(G_{1}) - 8)$. We have $B_{1,2}(G_{1}) = 3$ for $n = 3$ and $B_{1,2}(G_{1}) = 0$ for $n > 3$ which shows that $\sigma_{B} < 0$ regardless. Considering $G_{2} = S_{3} \sqcup P_{2}$ we have $B_{0,3}(G_{2}) = B_{2,1}(G_{2}) = B_{0,2}(G_{2}) = 1$, $B_{1,1}(G_{2}) = 4$, and thus $\sigma_{B} = \frac{3\pi}{2}B_{1,2}(G_{1}) \geq 0$. Considering $G_{2} = S_{m} \sqcup P_{2}$ with $m \geq 4$, then we have $B_{0,3}(G_{2}) = B_{2,1}(G_{2}) = 0$, $B_{1,1}(G_{2}) = m+1$, $B_{0,2}(G_{2}) = 1$, thus $\sigma_{B} = \frac{3\pi}{2}(B_{1,2}(G_{1}) - (m+1))$. We have $\sigma_{B} = \frac{3\pi}{2}(2-m) < 0$ for $n = 3$ and $\sigma_{B} = -\frac{3\pi}{2}(m+1) < 0$ for $n > 3$. Therefore, for $G_{2} = S_{m} \sqcup P_{2}$ we have $\sigma_{B} < 0$ when $m = 2$ or $m \geq 4$. In the more general case, we now consider $G_{2} = S_{m} \sqcup k P_{2}$. For $m = 2$, then $B_{0,3}(G_{2}) = B_{2,1}(G_{2}) = 0$, $B_{1,1}(G_{2}) = 2 + 2k$, $B_{0,2}(G_{2}) = k + 1$, thus $\sigma_{B} = \frac{3\pi}{2}(B_{1,2}(G_{1}) - (2+2k)(k + 1))$ where $\sigma_{B} = -\frac{3\pi}{2}(2k^{2} + 4k - 1) < 0$ for $n = 3$ and $\sigma_{B} = -\frac{3\pi}{2}(2+2k)(k+1) < 0$ for $n > 3$. For $m = 3$ then, $B_{0,3}(G_{2}) = B_{2,1}(G_{2})= 1$, $B_{1,1}(G_{2}) = 2 + 2k$, $B_{0,2}(G_{2}) = k$ and thus $\sigma_{B} = -\frac{3\pi}{2}(2k^{2} + 2k - B_{1,2}(G_{1}) - 4)$. If $n = 3$, we have $\sigma_{B} = -\frac{3\pi}{2}(2k^{2} + 2k -7) < 0$ when $k \geq 2$, and if $n > 3$, we have $\sigma_{B} = -3\pi(k^{2} + k - 2) < 0$ when $k \geq 2$. For $m \geq 4$, then $B_{0,3}(G_{2}) = B_{2,1}(G_{2}) = 0$, $B_{1,1}(G_{2}) = m - 1 + 2k$, $B_{0,2}(G_{2}) = k$ and thus $\sigma_{B} = \frac{3\pi}{2}(B_{1,2}(G_{1}) - (m-1+2k)k)$. If $n = 3$ then $\sigma_{B} = \frac{3\pi}{2}(3 - (m - 1 + 2k)k) < 0$ and if $n > 3$ then $\sigma_{B} = -\frac{3\pi}{2}(m-1+2k)k < 0$. Therefore, for $G_{2} = S_{m} \sqcup kP_{2}$ with $m \geq 2$, we can guarantee that $\sigma_{B} < 0$ if we always choose $k \geq 2$.
\end{proof}

\begin{remark}\label{remark:3.4}
In the generalized $G_{2}$ of (\ref{cor:3.3}), choosing $k \geq 2$ is to ensure that regardless of the order $m \geq 2$, the Liapunov number $\sigma_{B}$ is always negative. But this choice was necessary because of when $m = 3$. But if we only choose $m = 2$ or $m \geq 4$ then no matter the choice of $k$, the Liapunov number is negative.
\end{remark}

The following corollary is simple, but very important to state.

\begin{corollary}\label{cor:3.5}
If $G_{2}$ is the family of complete graphs of order at least $3$, then $\sigma_{B} \not< 0$.
\end{corollary}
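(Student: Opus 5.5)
The plan is to substitute $G_{2}=K_{n}$, $n\geq 3$, into the formula (\ref{Lyapunov_number}) of Proposition \ref{prop:3} and show that every term that survives is nonnegative. Since the only minus sign in $\sigma_{B}$ comes from $-B_{1,1}(G_{2})B_{0,2}(G_{2})$, it suffices to show that this product vanishes for complete graphs of order at least $3$. Everything else in $\sigma_{B}$ is a nonnegative multiple of counts $B_{i,j}(\cdot)\geq 0$, because these are cardinalities of families of vertex subsets.

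First I will determine all coefficients $B_{i,j}(K_{n})$. In $K_{n}$, every nonempty proper subset $S$ with $|S|=j$ has boundary equal to its complement, so $|B(S)|=n-j$. For $S=V(K_{n})$ the boundary is empty. Hence $B_{i,j}(K_{n})\neq 0$ with $j\geq 1$ forces either $i+j=n$ with $1\leq j\leq n-1$, giving $B_{n-j,j}(K_{n})=\binom{n}{j}$, or $(i,j)=(0,n)$. This agrees with the closed form $B(K_{n};x,y)=(x+y)^{n}+1-x^{n}$ stated at the beginning of the paper: expanding it leaves exactly the monomials $\binom{n}{j}x^{n-j}y^{j}$ for $1\leq j\leq n$, together with the constant $1$.

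From this I read off the coefficients appearing in $\sigma_{B}$. Since $1+1=2<n$ and $0+2=2<n$, we get $B_{1,1}(K_{n})=0$ and $B_{0,2}(K_{n})=0$ for all $n\geq 3$. Consequently both $2B_{0,2}(G_{1})B_{0,2}(G_{2})$ and $-B_{1,1}(G_{2})B_{0,2}(G_{2})$ vanish, and
$$\sigma_{B}=\frac{3\pi}{2}\Big(3B_{0,3}(K_{n})+B_{1,2}(G_{1})+B_{2,1}(K_{n})+B_{1,1}(G_{1})B_{0,2}(G_{1})\Big)\geq 0.$$
This holds for any admissible $G_{1}$ (order at least $2$, no isolated vertices), since each summand is a product of nonnegative integers. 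Therefore $\sigma_{B}\not<0$.

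As a consistency check, for $n=3$ the $G_{2}$ terms give $3B_{0,3}(K_{3})+B_{2,1}(K_{3})=3+3=6>0$, so in fact $\sigma_{B}>0$. For $n\geq 4$ these terms are zero and $\sigma_{B}$ depends only on $G_{1}$; it may equal $0$, which is why the statement is phrased as $\sigma_{B}\not<0$ rather than $\sigma_{B}>0$. There is no real obstacle here. The only point needing care is the coefficient identification $B_{1,1}(K_{n})=B_{0,2}(K_{n})=0$, which relies on the fact that all nonempty proper subsets of $K_{n}$ satisfy $i+j=n$.
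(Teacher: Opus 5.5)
Your proof is correct and follows essentially the paper's route: the paper just notes that $B_{1,1}(K_n)=0$ for $n\geq 3$, which removes the only negative term $-B_{1,1}(G_2)B_{0,2}(G_2)$ from $\sigma_B$. Your extra observations make explicit what the paper leaves implicit: that $B_{0,2}(K_n)=0$ as well, that every remaining term is a nonnegative count, and that $\sigma_B>0$ when $n=3$.
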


\begin{proof}
Having $G_{2} = K_{n}$, with $n \geq 3$, implies that $B_{1,1}(G_{2}) = 0$ by convention.
\end{proof}

\begin{remark}\label{remark:3.6}
Another more general way to interpret corollary (\ref{cor:3.5}) is that it cannot be guaranteed for a unique stable limit cycle to bifurcate from the origin if $G_{2}$ has no vertices with degree $1$. We can officially state it in the following:
\end{remark}

\begin{corollary}\label{cor:3.7}
If $G_{2}$ has no pendant vertices, then $\sigma_{B} \not< 0$. Furthermore, the same result holds if there are no components of $G_{2}$ isomorphic to $P_{2}$.
\end{corollary}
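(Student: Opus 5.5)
The plan is to read off the sign of $\sigma_{B}$ directly from the formula (\ref{Lyapunov_number}) in Proposition \ref{prop:3}, exactly as in the proof of Corollary \ref{cor:3.5}. By definition every $B_{i,j}(G)$ is the number of vertex subsets $S$ with $|B(S)|=i$ and $|S|=j$, so it is a nonnegative integer. Hence in
$$3B_{0,3}(G_{2}) + B_{1,2}(G_{1}) + B_{2,1}(G_{2}) + 2B_{0,2}(G_{1})B_{0,2}(G_{2}) + B_{1,1}(G_{1})B_{0,2}(G_{1}) - B_{1,1}(G_{2})B_{0,2}(G_{2})$$
every term is $\geq 0$ except the last one, $-B_{1,1}(G_{2})B_{0,2}(G_{2})$. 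It therefore suffices to show that this product vanishes under either hypothesis; then $\sigma_{B}\geq 0$, i.e.\ $\sigma_{B}\not<0$.

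For the first part, I will identify $B_{1,1}(G_{2})$ combinatorially. A subset with $|S|=1$ is a single vertex $\{v\}$, and its boundary $B(\{v\})$ is the set of vertices outside $S$ adjacent to $v$, namely $N(v)$. Thus $|B(\{v\})| = \deg(v)$, and $B_{1,1}(G_{2})$ counts the vertices of degree $1$, that is, the pendant vertices. If $G_{2}$ has no pendant vertices, then $B_{1,1}(G_{2})=0$, the negative term disappears, and $\sigma_{B}\geq 0$.

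For the second part, I will identify $B_{0,2}(G_{2})$ in the same way. A $2$-subset $\{u,v\}$ has empty boundary exactly when $N(u)\cup N(v)\subseteq\{u,v\}$. There are two possibilities:
\begin{enumerate}[(i)]
\item $u$ and $v$ are nonadjacent, in which case both are isolated;
\item $u$ and $v$ are adjacent with no other neighbours, in which case $\{u,v\}$ induces a component isomorphic to $P_{2}$.
\end{enumerate}
Case (i) is excluded by the standing assumption in Proposition \ref{prop:3} that $G_{2}$ has no isolated vertices. So $B_{0,2}(G_{2})$ equals the number of components of $G_{2}$ isomorphic to $P_{2}$. If there are none, then $B_{0,2}(G_{2})=0$, the product vanishes again, and $\sigma_{B}\geq 0$.

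The argument is essentially bookkeeping, so I do not expect a genuine obstacle. The only step requiring care is the boundary convention: I must confirm that $B(S)$ means the vertices outside $S$ with a neighbour in $S$, so that $|B(\{v\})|=\deg(v)$. I must also make sure the no-isolated-vertices hypothesis is invoked explicitly in the $B_{0,2}$ count, since without it nonadjacent pairs of isolated vertices would contribute to $B_{0,2}(G_{2})$.
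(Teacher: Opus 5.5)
Your proposal is correct and follows the paper's proof. Both arguments note that $-B_{1,1}(G_{2})B_{0,2}(G_{2})$ is the only term of $\sigma_{B}$ that can be negative, and make it vanish by identifying $B_{1,1}(G_{2})$ with the number of pendant vertices and $B_{0,2}(G_{2})$ with the number of components isomorphic to $P_{2}$. Your explicit use of the no-isolated-vertices hypothesis in the $B_{0,2}$ count spells out what the paper leaves to its citation.
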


\begin{proof}
$B_{1,1}(G_{2})$ represents the number of pendant vertices, i.e., vertices with degree $1$. If $G_{2}$ has no pendant vertices, then $B_{1,1}(G_{2}) = 0$ and thus $\sigma_{B} \not< 0$. Now, if $G_{2}$ has no component isomorphic to $P_{2}$, it is clear by \cite{Carballosa2025} that $B_{0,2}(G_{2}) = 0$, which eliminates the only negative term in $\sigma_{B}$.
\end{proof}

\begin{corollary}\label{cor:3.8}
Consider $G_{2} = P_{2}$.
\begin{enumerate}[label=\textbf{\alph*)}]
    \item If $G_{1}$ is either $K_{3}$, $C_{3}$, or $S_{n}$ with $n \geq 2$, then $\sigma_{B} > 0$.

    \item If $G_{1}$ is either $K_{n}$, $C_{n}$, or $W_{n}$, with $n \geq 4$, then $\sigma_{B} < 0$.
\end{enumerate}
\end{corollary}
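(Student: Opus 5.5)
We argue by direct substitution into the Liapunov number $\sigma_{B}$ of Proposition~\ref{prop:3}. The first step is to specialize $\sigma_{B}$ to $G_{2}=P_{2}$. The subsets of $V(P_{2})=\{v_{1},v_{2}\}$ are $\emptyset$, the singletons $\{v_{i}\}$ (each with boundary of size $1$), and $\{v_{1},v_{2}\}$ (with empty boundary). Hence
\[
B_{1,1}(P_{2})=2,\qquad B_{0,2}(P_{2})=1,\qquad B_{0,3}(P_{2})=B_{2,1}(P_{2})=0 .
\]
Substituting into (\ref{Lyapunov_number}) gives the reduced formula
\[
\sigma_{B}=\frac{3\pi}{2}\Big(B_{1,2}(G_{1})+2B_{0,2}(G_{1})+B_{1,1}(G_{1})B_{0,2}(G_{1})-2\Big).
\]
So the whole corollary reduces to computing the three coefficients $B_{1,2}(G_{1})$, $B_{0,2}(G_{1})$ and $B_{1,1}(G_{1})$ for each listed family and checking the sign of the bracket. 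Throughout we use the fact recalled in the proof of Corollary~\ref{cor:3.7}: $B_{0,2}(G)$ counts the components of $G$ isomorphic to $P_{2}$, so it vanishes for every connected graph other than $P_{2}$ itself.

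For part a), $K_{3}=C_{3}$. Every pair of vertices has the third vertex as its boundary and no vertex is pendant, so $B_{1,2}=3$ and $B_{0,2}=B_{1,1}=0$. The bracket equals $1>0$. For the stars we split into cases.
\begin{itemize}
\item $S_{2}=P_{2}$: here $B_{1,2}=0$, $B_{0,2}=1$, $B_{1,1}=2$, so the bracket is $0+2+2-2=2>0$.
\item $S_{3}=P_{3}$: every pair has a boundary of size $1$, so $B_{1,2}=3$; also $B_{0,2}=0$. The bracket is $1>0$.
\item $S_{n}$ with $n\ge 4$: $B_{0,2}=0$. A pair consisting of the centre and a leaf has boundary of size $n-2\ge 2$, so it does not contribute to $B_{1,2}$. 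Any two leaves have boundary equal to the centre alone. Hence $B_{1,2}=\binom{n-1}{2}\ge 3$ and the bracket is positive.
\end{itemize}

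For part b), $B_{0,2}(G_{1})=0$ in every case since all the graphs are connected and not $P_{2}$. The bracket therefore reduces to $B_{1,2}(G_{1})-2$, and it suffices to show $B_{1,2}(G_{1})=0$, i.e.\ that no pair $\{u,v\}$ has a boundary of size exactly $1$.
\begin{itemize}
\item If the minimum degree is at least $3$, then $N(u)\setminus\{v\}$ alone already has at least two elements, so the boundary of $\{u,v\}$ has size at least $2$. This covers $K_{n}$ and $W_{n}$ for $n\ge 4$, noting $W_{4}=K_{4}$.
\item For $C_{n}$ with $n\ge 4$, if $u,v$ are adjacent, their other neighbours are distinct because $n\ge 4$. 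If they are non-adjacent, $N(u)$ already supplies two boundary vertices.
\end{itemize}
In all these cases $\sigma_{B}=-3\pi<0$.

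The only delicate point is the small cases where structure degenerates. These are $S_{2}=P_{2}$, the only case in which $B_{0,2}(G_{1})\neq 0$ and the product term contributes, and $C_{4}$ and $W_{4}$, where one must check that the two boundary vertices are genuinely distinct. We handle them by explicit enumeration as above. The rest is a routine count.
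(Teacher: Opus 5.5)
Your proposal is correct and follows the paper's own proof essentially step for step. You specialize $\sigma_{B}$ to $G_{2}=P_{2}$, obtaining $\frac{3\pi}{2}\big(B_{1,2}(G_{1})+B_{0,2}(G_{1})(2+B_{1,1}(G_{1}))-2\big)$, and then evaluate the three coefficients family by family, reaching the same values as the paper ($\frac{3\pi}{2}$, $3\pi$, $\frac{3\pi}{2}$, a positive value for $S_{n}$ with $n\ge 4$, and $-3\pi$ in every case of part b); your minimum-degree and cycle arguments for $B_{1,2}(G_{1})=0$ just supply justification that the paper leaves implicit.
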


\begin{proof}
Consider $G_{2} = P_{2}$, for which then,

$$\sigma_{B} = \frac{3\pi}{2}(B_{1,2}(G_{1}) + B_{0,2}(G_{1})(2 + B_{1,1}(G_{1})) - 2).$$

We have that $K_{3} = C_{3}$ and thus let $G_{1} = K_{3} = C_{3}$, which implies $B_{1,2}(G_{1}) = 3$, $B_{0,2}(G_{1}) = 0$, and thus $\sigma_{B} = \frac{3\pi}{2} > 0$. Let $G_{1} = S_{n}$ with $n \geq 2$, then if $n = 2$ we have $B_{1,2}(G_{1}) = 0$, $B_{0,2}(G_{1}) = 1$, $B_{1,1}(G_{1}) = 2$, and thus $\sigma_{B} = 3\pi > 0$. If $n = 3$, then $B_{1,2}(G_{1}) = 3$, $B_{0,2}(G_{1}) = 0$, and hence $\sigma_{B} = \frac{3\pi}{2} > 0$. If $n \geq 4$, then $B_{1,2}(G_{1}) = \binom{n-1}{2} = \frac{(n-1)(n-2)}{2}$, $B_{0,2}(G_{1}) = 0$, and thus $\sigma_{B} = \frac{3\pi}{4}((n-1)(n-2)-4) > 0$. Therefore, $G_{1} = S_{n}$ with $n \geq 2$ guarantees $\sigma_{B} > 0$. Now, choose $G_{1}$ to be either $K_{n}$, $C_{n}$, or $W_{n}$, with $n \geq 4$, then $B_{1,2}(G_{1}) = B_{0,2}(G_{1}) = 0$, and thus $\sigma_{B} = -3\pi < 0$.
\end{proof}

The following corollary shows when one graph is chosen for Proposition \ref{prop:3} and how the value of $\sigma_{B}$ changes across the classical graphs.

\begin{corollary}\label{cor:3.9}
Consider $G = G_{1} = G_{2}$.
\begin{enumerate}[label=\textbf{\alph*)}]
    \item For $G = K_{n}$ or $G= W_{n}$, with $n \geq 4$, we have $\sigma_{B} = 0$.
    \item For $G = C_{n}$ with $n \geq 4$, we have $\sigma_{B} = \frac{3n\pi}{2}$.
    \item For $G = S_{2}$ we have $\sigma_{B} = 3\pi$ and for $G = S_{3}$ we have $\sigma_{B} = \frac{21\pi}{2}$. Furthermore, for $G = S_{n}$ with $n \geq 4$ we have $\sigma_{B} = \frac{3(n-1)(n-2)\pi}{4}$.
\end{enumerate}
\end{corollary}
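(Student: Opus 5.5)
The plan is to specialize Proposition \ref{prop:3} to $G_{1}=G_{2}=G$ and then count a handful of boundary coefficients for each family. With $G_{1}=G_{2}=G$, the two mixed terms $B_{1,1}(G_{1})B_{0,2}(G_{1})$ and $-B_{1,1}(G_{2})B_{0,2}(G_{2})$ in (\ref{Lyapunov_number}) cancel. The formula therefore reduces to
$$\sigma_{B}=\frac{3\pi}{2}\Big(3B_{0,3}(G)+B_{1,2}(G)+B_{2,1}(G)+2B_{0,2}(G)^{2}\Big).$$
So it suffices to compute four coefficients for each family. Throughout I use the convention already employed in Corollaries \ref{cor:3.7} and \ref{cor:3.8}: $B(S)$ is the set of vertices outside $S$ adjacent to some vertex of $S$. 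The counts then read as follows:
\begin{itemize}
\item $B_{2,1}(G)$ is the number of vertices of degree $2$;
\item $B_{0,2}(G)$ is the number of $P_{2}$ components;
\item $B_{0,3}(G)$ is the number of $3$-vertex components;
\item $B_{1,2}(G)$ is the number of pairs whose joint outside neighbourhood is a single vertex.
\end{itemize}

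For part a), take $G=K_{n}$ with $n\ge 4$, so all degrees are $n-1\ge 3$ and there are no small components. Any pair has outside neighbourhood of size $n-2\ge 2$, so all four coefficients vanish. For $W_{n}$, note that $W_{4}=K_{4}$. For $n\ge 5$ the rim vertices have degree $3$ and the hub has degree $n-1\ge 4$, so $B_{2,1}=0$, and the graph is connected of order at least $5$, so $B_{0,2}=B_{0,3}=0$. For $B_{1,2}$, a pair containing the hub sees all other $n-2\ge 3$ vertices. A pair of rim vertices sees the hub plus at least one further rim vertex, since the rim cycle has length $n-1\ge 4$. Hence $B_{1,2}=0$ and $\sigma_{B}=0$.

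For part b), take $G=C_{n}$ with $n\ge 4$. Every vertex has degree $2$, so $B_{2,1}=n$, and there are no small components. An adjacent pair has two outside neighbours, and a nonadjacent pair has at least two. Hence $B_{1,2}=0$ and $\sigma_{B}=\frac{3n\pi}{2}$.

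For part c), first take $S_{2}=P_{2}$. Here $B_{0,2}=1$ and the other three coefficients are $0$, giving $\frac{3\pi}{2}\cdot 2=3\pi$. Next take $S_{3}=P_{3}$. The whole graph is a $3$-vertex component, so $B_{0,3}=1$. The centre has degree $2$, so $B_{2,1}=1$. All three pairs have a single outside neighbour, so $B_{1,2}=3$. This gives $\frac{3\pi}{2}(3+3+1)=\frac{21\pi}{2}$. Finally take $S_{n}$ with $n\ge 4$. The centre has degree $n-1\ge 3$ and the leaves have degree $1$, so $B_{2,1}=0$, and the graph is connected of order at least $4$. A centre–leaf pair sees the other $n-2\ge 2$ leaves, while any two leaves see only the centre. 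Hence $B_{1,2}=\binom{n-1}{2}$ and $\sigma_{B}=\frac{3(n-1)(n-2)\pi}{4}$.

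The main obstacle is the enumeration of $B_{1,2}$, which needs a small case check at the boundary values ($n=4$ for cycles, $n=5$ for wheels, where $W_{4}=K_{4}$ must be noted separately). The other coefficients follow directly from degrees and components.
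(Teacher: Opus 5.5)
Your proposal is correct and follows the paper's proof: specialize Proposition~\ref{prop:3} to $G_{1}=G_{2}=G$, where the mixed terms cancel and leave $\frac{3\pi}{2}\bigl(3B_{0,3}+B_{1,2}+B_{2,1}+2B_{0,2}^{2}\bigr)$, then count these four coefficients for each family. Your version additionally spells out the $B_{1,2}$ counts and the observation $W_{4}=K_{4}$, which the paper covers only by saying ``due to the structure of the graphs.''
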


\begin{proof}
Let $G = G_{1} = G_{2}$. Then we have,

$$\sigma_{B} = \frac{3\pi}{2}(3B_{0,3}(G) + B_{1,2}(G) + B_{2,1}(G) + 2(B_{0,2}(G))^{2}) \not< 0.$$

If we choose $G = K_{n}$ or $G = W_{n}$, with $n \geq 4$, is clear that $B_{0,3}(G) = B_{1,2}(G) = B_{2,1}(G) = B_{0,2}(G) = 0$ due to the structure of the graphs, and hence $\sigma_{B} = 0$. If on the other hand we have $G = C_{n}$ with $n \geq 4$, then $B_{0,3}(G) = B_{1,2}(G) = B_{0,2}(G) = 0$ and $B_{2,1}(G) = n$ which simplifies to $\sigma_{B} = \frac{3n\pi}{2}$. If $G = S_{2}$ then $B_{0,3}(G) = B_{1,2}(G) = B_{2,1}(G) = 0$, $B_{0,2}(G) = 1$, which gives $\sigma_{B} = 3\pi$. If $G = S_{3}$ then $B_{0,3}(G) = 1$, $B_{1,2}(G) = 3$, $B_{2,1}(G) = 1$, $B_{0,2}(G) = 0$, which gives $\sigma_{B} = \frac{21\pi}{2}$. For $G_{1} = S_{n}$ with $n \geq 4$ we have $B_{0,3}(G) = B_{2,1}(G) = B_{0,2}(G) = 0$, $B_{1,2}(G) = \binom{n-1}{2} = \frac{(n-1)(n-2)}{2}$, and hence $\sigma_{B} = \frac{3(n-1)(n-2)\pi}{4}$.
\end{proof}

\begin{corollary}\label{cor:3.10}
If $G_{1}$ has order $n_{1} \geq 3$ with no pendant vertices, is connected, no pair of vertices has exactly one-single other connection to a vertex, and if $G_{2}$ has order $n_{2} \geq 3$, is connected, and has at least one vertex of degree $2$, then we have a subcritical Hopf bifurcation. Furthermore, if $G_{2}$ has at least one component isomorphic to $P_{2}$, no components isomorphic to $P_{3}$ or $C_{3}$, has no vertex of degree $2$, and has at least one pendant vertex, then we have a supercritical Hopf bifurcation.
\end{corollary}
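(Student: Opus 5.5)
The plan is to read off the sign of $\sigma_{B}$ from the formula in Proposition~\ref{prop:3} by identifying which boundary coefficients the structural hypotheses force to vanish or be positive. Then I invoke Theorem~\ref{thm:3.1} and Remark~\ref{remark:3.2}: $\sigma_{B}>0$ gives a subcritical Hopf bifurcation, $\sigma_{B}<0$ a supercritical one. Throughout I use the combinatorial meaning of the low-order coefficients already used in Corollaries~\ref{cor:3.7}--\ref{cor:3.9}:
\begin{itemize}
\item $B_{1,1}(G)$ counts pendant vertices;
\item $B_{0,2}(G)$ counts components isomorphic to $P_{2}$;
\item $B_{2,1}(G)$ counts vertices of degree $2$;
\item $B_{0,3}(G)$ counts components on exactly three vertices (isomorphic to $P_{3}$ or $C_{3}$);
\item $B_{1,2}(G)$ counts pairs $\{u,v\}$ whose joint boundary is a single vertex.
\end{itemize}

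\textbf{First part.} Since $G_{1}$ has no pendant vertices, $B_{1,1}(G_{1})=0$. Since $G_{1}$ is connected of order $n_{1}\ge 3$, it has no $P_{2}$ component, so $B_{0,2}(G_{1})=0$. The hypothesis that no pair of vertices has exactly one other common connection is exactly $B_{1,2}(G_{1})=0$. For $G_{2}$, connectivity with $n_{2}\ge 3$ gives $B_{0,2}(G_{2})=0$, which kills the only negative term $-B_{1,1}(G_{2})B_{0,2}(G_{2})$. Hence
\[
\sigma_{B}=\tfrac{3\pi}{2}\bigl(3B_{0,3}(G_{2})+B_{2,1}(G_{2})\bigr).
\]
The vertex of degree $2$ gives $B_{2,1}(G_{2})\ge 1$, so $\sigma_{B}>0$ and the bifurcation is subcritical.

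\textbf{Second part.} I keep the same hypotheses on $G_{1}$, so its three terms still vanish. For $G_{2}$:
\begin{itemize}
\item having no component isomorphic to $P_{3}$ or $C_{3}$ gives $B_{0,3}(G_{2})=0$;
\item having no vertex of degree $2$ gives $B_{2,1}(G_{2})=0$;
\item a $P_{2}$ component gives $B_{0,2}(G_{2})\ge 1$;
\item a pendant vertex gives $B_{1,1}(G_{2})\ge 1$ (indeed the $P_{2}$ component already supplies two).
\end{itemize}
The cross term $2B_{0,2}(G_{1})B_{0,2}(G_{2})$ is zero because $B_{0,2}(G_{1})=0$. Therefore
\[
\sigma_{B}=-\tfrac{3\pi}{2}B_{1,1}(G_{2})B_{0,2}(G_{2})<0,
\]
and the bifurcation is supercritical.

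The main obstacle is justifying the combinatorial interpretations of $B_{2,1}$, $B_{0,3}$ and $B_{1,2}$, and translating the informal hypothesis on pairs of vertices into the precise statement $B_{1,2}(G_{1})=0$. For $B_{2,1}$, a single vertex has boundary of size $2$ iff it has degree $2$. For $B_{0,3}$, a $3$-set has empty boundary iff it is a union of components, and since $n\ge 3$ rules out isolated vertices in the paper's setting, it must be a whole $3$-vertex component. For $B_{1,2}$, a pair has boundary of size one iff its combined neighbourhood outside itself is a single vertex. I would state these via the definition in \cite{Carballosa2025} as used in Corollary~\ref{cor:3.7}, noting that the second part implicitly requires $G_{2}$ to be disconnected.
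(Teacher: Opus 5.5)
Your proof is correct and is essentially the paper's argument: the $G_1$ hypotheses kill $B_{1,1}(G_1)$, $B_{0,2}(G_1)$ and $B_{1,2}(G_1)$, and the sign is then read off from $3B_{0,3}(G_2)+B_{2,1}(G_2)-B_{1,1}(G_2)B_{0,2}(G_2)$ in each case, where you justify the coefficient meanings directly and the paper cites Theorem~2.2 of \cite{Carballosa2025}. Two small corrections to your closing aside: the second part does not force $G_2$ to be disconnected, since $G_2=P_2$ (order $2$, allowed by Proposition~\ref{prop:3}) satisfies it, as in Corollary~\ref{cor:3.8}(b); and isolated vertices are ruled out by the standing hypothesis of Proposition~\ref{prop:3}, not by $n\ge 3$.
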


\begin{proof}
    For this proof we are using Theorem 2.2 of (\cite{Carballosa2025}). Let $G_{1}$ be of order $n_{1} \geq 3$ with no pendant vertices, that is, $B_{1,1}(G_{1}) = 0$. Adding the condition that $G_{1}$ be connected, then we have $B_{0,j}(G_{1}) = 0$ for $1 \leq j \leq n_{1}-1$, which always includes $B_{0,2}(G_{1}) = 0$. Now, $G_{1}$ having no pair of vertices that has exactly one-single other connection to a vertex implies that $B_{1,2}(G_{1}) = 0$. Thus,
    $$\sigma_{B} = \frac{3\pi}{2}(3B_{0,3}(G_{2}) + B_{2,1}(G_{2}) - B_{1,1}(G_{2})B_{0,2}(G_{2})).$$

    For the first case, let $G_{2}$ be of order $n_{2} \geq 3$, be connected, and has at least one vertex of degree $2$, then $B_{0,2}(G_{2}) = 0$ and $B_{2,1}(G_{2}) > 0$, which is enough to guarantee that $\sigma_{B} > 0$.

    For the second case, let $G_{2}$ be a graph with at least one component isomorphic to $P_{2}$, which implies $p := B_{0,2}(G) > 0$. Adding the restriction that $G_{2}$ has no components isomorphic to $P_{3}$ or $C_{3}$, directly implies $q := B_{0,3}(G_{2}) = 0$. Now, $G_{2}$ having no vertex of degree $2$, and it has at least one pendant vertex implies $B_{2,1}(G_{2}) = 0$ and $B_{1,1}(G_{2}) > 0$. Thus, giving $\sigma_{B} = -\frac{3\pi}{2}B_{1,1}(G_{2})B_{0,2}(G_{2}) < 0$.
\end{proof}

\begin{remark}\label{remark:3.11}
    To give some reassurance that the graphs as claimed in Corollary (\ref{cor:3.10}) exists, note that $C_{4}$ satisfies the condition for $G_{1}$, and $2P_{2}$ satisfies the condition for the second version of $G_{2}$.
\end{remark}

\begin{figure}[htbp]
  \centering

  \begin{minipage}[t]{0.32\textwidth}
    \centering
    \begin{tikzpicture}[
      every node/.style={circle, draw, semithick, minimum size=4mm, inner sep=1pt, font=\scriptsize}
    ]
      % K4
      \node (v1) at (-0.9,  0.9) {$v_1$};
      \node (v2) at ( 0.9,  0.9) {$v_2$};
      \node (v3) at (-0.9, -0.9) {$v_3$};
      \node (v4) at ( 0.9, -0.9) {$v_4$};

      \draw[semithick] (v1)--(v2);
      \draw[semithick] (v1)--(v3);
      \draw[semithick] (v1)--(v4);
      \draw[semithick] (v2)--(v3);
      \draw[semithick] (v2)--(v4);
      \draw[semithick] (v3)--(v4);
    \end{tikzpicture}
  \end{minipage}\hfill
  \begin{minipage}[t]{0.32\textwidth}
    \centering
    \begin{tikzpicture}[
      every node/.style={circle, draw, semithick, minimum size=4mm, inner sep=1pt, font=\scriptsize}
    ]
      %S2 (two vertices, one edge)
      \node (u1) at (-1.4,  0.0) {$u_1$};
      \node (u2) at (-0.6,  0.0) {$u_2$};
      \draw[semithick] (u1)--(u2);

      %First P2
      \node (u3) at ( 0.4,  0.6) {$u_3$};
      \node (u4) at ( 1.2,  0.6) {$u_4$};
      \draw[semithick] (u3)--(u4);

      %Second P2
      \node (u5) at ( 0.4, -0.6) {$u_5$};
      \node (u6) at ( 1.2, -0.6) {$u_6$};
      \draw[semithick] (u5)--(u6);
    \end{tikzpicture}
  \end{minipage}\hfill
  \begin{minipage}[t]{0.32\textwidth}
    \centering
    \includegraphics[width=\linewidth]{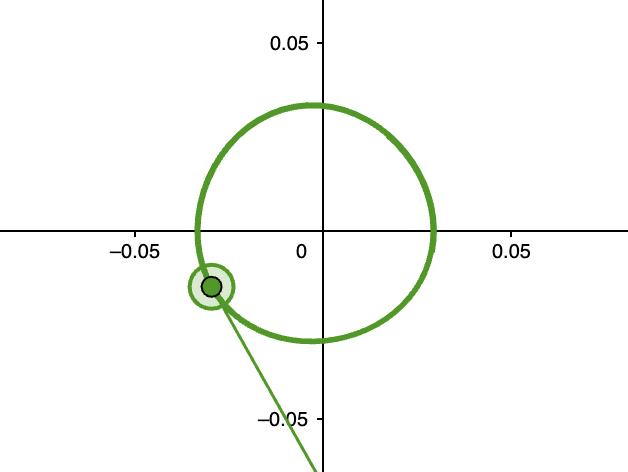}
  \end{minipage}

  \caption{Example with $G_{1}=K_{4}$, $G_{2}=S_{2}\sqcup 2P_{2}$, and $\alpha = 0.001$ for (\ref{system:3}).}
  \label{fig:K4_vs_S2_2P2}
\end{figure}

Concerning Proposition \ref{prop:3}, if $\sigma_{B} = \alpha = 0$, then the origin will be a weak focus of multiplicity $g > 1$. It is proven in \cite{Andronov} that at most $g$ limit cycles can bifurcate from the origin as $\alpha$ varies through the bifurcation value $\alpha = 0$. Furthermore, there is an analytic perturbation of,

\begin{equation}\label{equation:14}
    \begin{aligned}
\frac{dx}{dt} &= - y + B(G_{1};x,y) - 1, \\
\frac{dy}{dt} &= x + B(G_{2};x,y) - 1,
\end{aligned}
\end{equation}

which causes exactly $g$ limit cycles to bifurcate from the origin at $\alpha = 0$. We are maintaining $G_{1},G_{2}$ to have the same properties as in Proposition \ref{prop:3}. The next theorem is explicitly stated in \cite{Perko2001}.

\begin{theorem}\label{thm:3.12}
If the origin is a multiple focus of multiplicity $g$ of the system (\ref{equation:14}), then for $k \geq 2g + 1$ there is a $\delta > 0$ and $\epsilon > 0$ such that any system $\epsilon$-close to (\ref{equation:14}) in the $C^{k}$-norm has at most $g$ limit cycles in $N_{\delta}(0)$ and for any $\delta > 0$ and $\epsilon > 0$ there is an analytic system which is $\epsilon$-close to (\ref{equation:14}) in the $C^{k}$-norm and has exactly $g$ simple limit cycles in $N_{\delta}(0)$.
\end{theorem}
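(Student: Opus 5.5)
Theorem \ref{thm:3.12} is the classical bifurcation theorem for a multiple focus (Andronov et al., as stated in \cite{Perko2001}) applied to the analytic system (\ref{equation:14}). So the plan is not to re-derive it. Instead I would check that (\ref{equation:14}) satisfies its hypotheses and then run the standard argument in this setting. First, since $G_{1},G_{2}$ have no isolated vertices, $B_{\ell,0}$ contributes only through $B_{0,0}=1$ and every singleton $S$ has $|B(S)|\geq 1$. Hence $B(G_{i};x,y)-1$ has no constant term and no linear term. The vector field is therefore polynomial (hence analytic), vanishes at the origin, and has linear part $\begin{bmatrix}0&-1\\1&0\end{bmatrix}$ with eigenvalues $\pm i$. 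So the origin is a weak focus, and when it is a multiple focus of multiplicity $g$ the Liapunov (focus) quantities satisfy $\sigma_{1}=\cdots=\sigma_{g-1}=0\neq\sigma_{g}$, with $\sigma_{1}=\sigma_{B}$ from Proposition \ref{prop:3}.

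\textbf{Upper bound.} Work in polar coordinates and take the Poincar\'e return map on a transversal ray near $0$. Write the displacement function $d(r)=P(r)-r$. For (\ref{equation:14}), $d$ is analytic with $d(r)=c\,r^{2g+1}+O(r^{2g+2})$ and $c\neq 0$ (a multiple of $\sigma_{g}$). For a $C^{k}$ system $\epsilon$-close with $k\geq 2g+1$, the return map and its displacement $d_{\epsilon}$ depend continuously in the $C^{2g+1}$ topology on the field. This uses smooth dependence of the flow on the vector field together with the fact that $r=0$ is an interior point of the section in the blown-up coordinates. Thus $|d_{\epsilon}^{(2g+1)}(r)|\geq |c|(2g+1)!/2>0$ on $[0,\delta]$ for $\epsilon$ small. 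Repeated application of Rolle's theorem then gives $d_{\epsilon}$ at most $2g+1$ zeros in $(-\delta,\delta)$. Zeros come in symmetric pairs $\pm r$ (the same orbit crosses the full line twice) or sit at a possibly shifted equilibrium, which leaves at most $g$ positive zeros, i.e.\ at most $g$ limit cycles in $N_{\delta}(0)$. The main obstacle is this bookkeeping once the perturbed equilibrium moves off $0$. I would first translate it back to the origin, which is permissible by the implicit function theorem since the linearization is nonsingular.

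\textbf{Realization of exactly $g$ cycles.} I would construct an explicit analytic (indeed polynomial) perturbation. Add $\lambda_{0}(x,y)+\sum_{j=1}^{g-1}\lambda_{j}(x^{2}+y^{2})^{j}(x,y)$ to (\ref{equation:14}). These radial terms shift the focus quantities triangularly: $\lambda_{j}$ alters $\sigma_{j+1}$ to leading order without affecting the lower ones. Hence the displacement becomes $d(r)\approx \sum_{j=0}^{g}\tilde\sigma_{j}r^{2j+1}$ with $\tilde\sigma_{g}\approx c$. Choose the $\lambda_{j}$ successively, $|\lambda_{0}|\ll|\lambda_{1}|\ll\cdots\ll|\lambda_{g-1}|\ll 1$, with alternating signs opposite to the next coefficient. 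Each step then produces one new simple zero of $d$ at a smaller radius while preserving, by the implicit function theorem, the simple zeros already created. This is Bautin's sign-alternation argument. After $g$ steps we have $g$ simple positive zeros, i.e.\ $g$ simple limit cycles in $N_{\delta}(0)$. Taking all $\lambda_{j}$ small makes the system $\epsilon$-close in $C^{k}$ on a compact neighbourhood. Combined with the upper bound, the count is exactly $g$.
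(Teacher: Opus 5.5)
Your proposal is correct, and its core is the paper's own route. The paper gives no proof of Theorem~\ref{thm:3.12}; it simply quotes the result from \cite{Perko2001}, with \cite{Andronov} as the original source.

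You add two things. The first is the hypothesis check the paper leaves implicit, and you do it correctly. Since $G_1,G_2$ have no isolated vertices, only $S=\emptyset$ satisfies $|B(S)|+|S|\le 1$. Hence $B(G_i;x,y)-1$ has no constant or linear terms, and (\ref{equation:14}) is a polynomial system whose linear part is the rotation with eigenvalues $\pm i$. The second is a sketch of the classical Andronov--Bautin proof: Rolle's theorem for the displacement function on a full transversal line, then successive radial perturbations $\lambda_j(x^2+y^2)^j(x,y)$ with alternating signs. The citation buys brevity. Your sketch shows where the threshold $k\ge 2g+1$ and the count $g$ come from, but as written it needs repairs in a few places.

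\textbf{Regularity at the borderline $k=2g+1$.} Your justification of $C^{2g+1}$-control of $d_\epsilon$, via smooth dependence in blown-up coordinates, is one derivative short. The formula $\dot\theta=(xf_2-yf_1)/r^2$ divides a $C^k$ function by $r$, so the blown-up field is in general only $C^{k-1}$. That gives only $C^{2g}$-control and says nothing about $d_\epsilon^{(2g+1)}$. The claim is still true, for the following reason:
\begin{itemize}
\item $dr/d\theta=R/\Theta$, where $R=\cos\theta f_1+\sin\theta f_2$ is of class $C^k$ and $R=O(r)$ after your translation;
\item $\partial_r^k\Theta$ is only $o(1/r)$, and $O(1+\epsilon/|r|)$ uniformly over $\epsilon$-close fields;
\item the factor $R$ absorbs this growth.
\end{itemize}
This must be said explicitly, or else you should take $k\ge 2g+2$.

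\textbf{Smaller points.}
\begin{itemize}
\item The lower bound on $|d_\epsilon^{(2g+1)}|$ is needed on $(-\delta,\delta)$, not on $[0,\delta]$. Your count pairs the positive and negative crossings of each closed orbit (these are not literally $\pm r$) with the zero at $r=0$.
\item In the realization step the even-order coefficients of $d$ do not vanish. You need that they lie in the ideal generated by the lower odd ones, so they cannot upset the sign alternation.
\item ``Exactly $g$'' follows from your upper bound only for $\delta\le\delta_0$. For larger $\delta$ the statement as written can fail: a hyperbolic limit cycle of (\ref{equation:14}) lying in $N_\delta(0)$ survives every $C^1$-small perturbation.
\end{itemize}
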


\begin{remark}\label{remark:3.13}
    $N_{\delta}(0)$ is the $\delta$-neighborhood of the origin and for a vector field $f \in C^{k}(\Omega)$ where $\Omega$ is an open subset of $\mathbb{R}^{n}$, the $C^{k}$-norm is defined as,

    \begin{equation}
        \|f\|_{k} = \sup_{\Omega}|f| + \sup_{\Omega}\|Df\| + \cdots + \sup_{\Omega}\|D^{k}f\|,
    \end{equation}
    where $\|D^{k}f\| = \max|\frac{\partial^{k}f}{\partial x_{j_{1}}\cdots \partial x_{j_{k}}}|$. The maximum is taken over $j_{1},\ldots,j_{k}$ = $1,\ldots,n$. Note that each of the spaces of functions in $C^{k}(\Omega)$, bounded in the $C^{k}$-norm, is a Banach space and $C^{k+1}(\Omega) \subset C^{k}(\Omega)$ for $k = 0,1,\ldots$.
\end{remark}

The following Proposition shows a generalization of the proposition \ref{prop:3}, which has its roots in Chapter 4.4 of \cite{Perko2001}.

\begin{proposition}\label{prop:4}
    Consider $G_{1}$,$G_{2}$ to be graphs of order $n_{1},n_{2} \geq 2$, respectively, and do not contain isolated vertices. Let $\sigma = (\alpha_{1}\alpha_{4}-\alpha_{2}\alpha_{3})$, $\alpha_{1} + \alpha_{4} = 0$, such that we have the following planar analytic system,
    
    \begin{align*}
    \frac{dx}{dt} &= \alpha_{1}x + \alpha_{2}y + B(G_{1};x,y) - 1, \\
    \frac{dy}{dt} &= \alpha_{3}x + \alpha_{4}y + B(G_{2};x,y) - 1,
\end{align*}
where the Jacobian matrix evaluated at the origin will have a pair of imaginary eigenvalues and the origin will be a weak focus. The generalized Liapunov number $\widetilde{\sigma}_{B}$ is given by
\begin{equation}\label{generalized_Lyapunov_number}
\begin{split}
\widetilde{\sigma}_{B} = -\frac{3\pi}{2\alpha_{2}\sigma^{\frac{3}{2}}}\Big[
\big[\alpha_{1}\alpha_{3}((B_{1,1}(G_{1}))^{2} + B_{1,1}(G_{1})B_{0,2}(G_{2}) + B_{0,2}(G_{1})B_{1,1}(G_{2})) \\
+\alpha_{1}\alpha_{2}((B_{1,1}(G_{2}))^{2}+B_{1,1}(G_{1})B_{0,2}(G_{2})) + \alpha_{3}^{2}(B_{1,1}(G_{1})B_{0,2}(G_{1})+2B_{0,2}(G_{1})B_{0,2}(G_{2})) \\
-2\alpha_{1}\alpha_{3}(B_{0,2}(G_{2}))^{2} + (\alpha_{2}\alpha_{3}-2\alpha_{1}^{2})B_{1,1}(G_{2})B_{0,2}(G_{2})\big] \\
-(\alpha_{1}^{2}+\alpha_{2}\alpha_{3})\big[3\alpha_{3}B_{0,3}(G_{2}) + 2\alpha_{1}(B_{2,1}(G_{1}) + B_{1,2}(G_{2})) + (\alpha_{3}B_{1,2}(G_{1}) - \alpha_{2}B_{2,1}(G_{2}))\big]\Big].
\end{split}
\end{equation}
\end{proposition}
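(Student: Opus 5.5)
The plan is to reduce Proposition~\ref{prop:4} to the classical formula for the Liapunov number of a planar analytic system with linear part $A=\begin{bmatrix}\alpha_1&\alpha_2\\ \alpha_3&\alpha_4\end{bmatrix}$, $\operatorname{tr}A=0$, $\det A=\sigma>0$, as given in Chapter 4.4 of \cite{Perko2001}. For $\dot x = ax+by+p(x,y)$, $\dot y = cx-ay+q(x,y)$ with $\Delta = -a^2-bc>0$, that formula reads
\[
\sigma^{\ast}=\frac{-3\pi}{2b\Delta^{3/2}}\Big\{\big[ac(a_{11}^2+a_{11}b_{02}+a_{02}b_{11})+ab(b_{02}^2+a_{20}b_{11})+c^2(a_{11}a_{02}+2a_{02}b_{02})-2ac(b_{02}^2-a_{20}a_{02})-2ab(a_{20}^2-b_{20}b_{02})-b^2(2a_{20}b_{20}+b_{11}b_{20})+(bc-2a^2)(b_{11}b_{02}-a_{11}a_{20})\big]-(a^2+bc)\big[3(cb_{03}-ba_{30})+2a(a_{21}+b_{12})+(ca_{12}-bb_{21})\big]\Big\},
\]
where $p=\sum a_{ij}x^iy^j$ and $q=\sum b_{ij}x^iy^j$ are the nonlinear parts. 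I would quote this formula as known and set $a=\alpha_1$, $b=\alpha_2$, $c=\alpha_3$. Since $\alpha_4=-\alpha_1$, we get $\Delta=\sigma$.

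The first step is to identify the nonlinear terms $p=B(G_1;x,y)-1-(\text{linear part})$ and $q$ likewise in boundary-polynomial coefficients. Here $a_{ij}=B_{i,j}(G_1)$ and $b_{ij}=B_{i,j}(G_2)$ for $i+j\ge 2$, using the indexing $x^{|B(S)|}y^{|S|}$ of (\ref{Poly1}).

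The key structural facts then come from the combinatorics, exactly as in the proof of Proposition~\ref{prop:3}:
\begin{enumerate}[(i)]
\item The linear terms of $B(G;x,y)-1$ are $B_{1,0}x+B_{0,1}y$. Here $B_{0,1}(G)$ counts isolated vertices, which is $0$ by hypothesis, and $B_{1,0}=0$ because $S\neq\emptyset$ forces $|S|\ge1$. So the linear part is exactly $A$ and the origin is an equilibrium.
\item Every monomial has $j=|S|\ge1$ unless $S=\emptyset$, so all coefficients $a_{i0}$ and $b_{i0}$ vanish. In particular $a_{20}=b_{20}=a_{30}=0$.
\end{enumerate}

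Substituting these vanishings kills the terms $-2ab(a_{20}^2-b_{20}b_{02})$, $-b^2(\cdots)$, $a_{20}$ inside the other brackets, and $ba_{30}$. What remains is precisely the bracket in (\ref{generalized_Lyapunov_number}), after renaming $a_{11}=B_{1,1}(G_1)$, $b_{02}=B_{0,2}(G_2)$, and so on. As a consistency check I would specialize to $\alpha_1=0$, $\alpha_2=-1$, $\alpha_3=1$ and verify that Proposition~\ref{prop:3} is recovered. This also pins down the sign convention, since the paper states that $\sigma_B>0$ means stable.

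The main obstacle is bookkeeping: matching the index convention $(i,j)\leftrightarrow x^iy^j$ against the boundary polynomial ordering, and tracking signs. In particular, the displayed formula pairs $B_{2,1}(G_1)+B_{1,2}(G_2)$ with $2\alpha_1$ and $B_{1,1}(G_2)^2$ with $\alpha_1\alpha_2$. I would first check each surviving term against Perko's original with $a_{ij}\mapsto B_{i,j}(G_1)$ and $b_{ij}\mapsto B_{i,j}(G_2)$. If discrepancies appear, such as $B_{1,1}(G_2)^2$ versus $b_{02}^2$, I would recheck the transcription by recomputing the first focal value directly. That computation goes through the normal-form reduction: conjugate $A$ to rotation form via $T=\begin{bmatrix}1&0\\ -\alpha_1/\alpha_2 & -\sqrt{\sigma}/\alpha_2\end{bmatrix}$, then apply Proposition~\ref{prop:3}'s formula in the new coordinates.
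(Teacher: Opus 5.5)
\textbf{Gap: the quoted formula is wrong in its $ab$-term.} Your route is the one the paper relies on; it gives no argument beyond citing Perko's Chapter 4.4 formula. You substitute $a=\alpha_1$, $b=\alpha_2$, $c=\alpha_3$, $a_{ij}=B_{i,j}(G_1)$, $b_{ij}=B_{i,j}(G_2)$, and use your facts (i)--(ii), which are correct. The problem is the formula you quote. You write the $ab$-term as $ab(b_{02}^2+a_{20}b_{11})$, but Perko's term is $ab(b_{11}^2+a_{20}b_{11}+a_{11}b_{02})$.

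With your version, substitution produces $\alpha_1\alpha_2B_{0,2}(G_2)^2$ in place of $\alpha_1\alpha_2\big(B_{1,1}(G_2)^2+B_{1,1}(G_1)B_{0,2}(G_2)\big)$. So your claim that ``what remains is precisely the bracket'' is false as written. You notice this mismatch but leave it unresolved.

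It is your version that is wrong, not the statement. Keep only the coefficient $b_{11}=k\neq0$. Eliminating $y$ gives $\ddot x-kx\dot x+\sigma x+\alpha_1kx^2=0$. Rescale time by $\sqrt{\sigma}$ and put $x'=-y$; this gives $x'=-y$, $y'=x+\tfrac{\alpha_1k}{\sigma}x^2+\tfrac{k}{\sqrt{\sigma}}xy$. The classical rotation-form formula then yields Liapunov number $-\tfrac{3\pi}{2}\cdot\tfrac{k}{\sqrt{\sigma}}\cdot\tfrac{\alpha_1k}{\sigma}=-\tfrac{3\pi\alpha_1k^2}{2\sigma^{3/2}}$. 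This is exactly the contribution of Perko's $ab\,b_{11}^2$ term, whereas your formula gives $0$. With the correct term and $a_{20}=b_{20}=a_{30}=0$, every surviving term matches (\ref{generalized_Lyapunov_number}) and the argument closes.

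\textbf{Your safeguards would not settle this.} The specialization $\alpha_1=0$, $\alpha_2=-1$, $\alpha_3=1$ kills every term carrying the factor $a=\alpha_1$. It therefore cannot detect the $ab$-term error; it only confirms the prefactor and the $\alpha_1$-free terms.

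The fallback of conjugating by $T$ and applying Proposition~\ref{prop:3}'s formula also fails as stated. Formula (\ref{Lyapunov_number}) already assumes $a_{20}=b_{20}=a_{30}=0$. After $y=-(\alpha_1u+\sqrt{\sigma}\,v)/\alpha_2$, each $y^j$ produces $u^j$ terms, and the second equation becomes $\dot v=\sqrt{\sigma}\,u-(\alpha_1p+\alpha_2q)/\sqrt{\sigma}$. You would therefore need the unreduced formula $\tfrac{3\pi}{2}\big[3(a_{30}+b_{03})+(a_{12}+b_{21})-2(a_{20}b_{20}-a_{02}b_{02})+a_{11}(a_{02}+a_{20})-b_{11}(b_{02}+b_{20})\big]$ together with the time rescaling. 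That computation is essentially the derivation of Perko's general formula, which you have not carried out.

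Finally, do not use the paper's sentence that $\sigma_B>0$ means stable to fix signs. It contradicts Theorem~\ref{thm:3.1} and Perko's convention that $\sigma<0$ means stable. The sign is already fixed by recovering (\ref{Lyapunov_number}).
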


Concerning Proposition \ref{prop:4}, if we have $\widetilde{\sigma}_{B} \not= 0$, then Theorem \ref{thm:3.1} still holds with $\alpha = \alpha_{1} + \alpha_{4}$.

\end{document}